\documentclass[12pt,a4paper,twoside, reqno]{amsart}
\usepackage{amsfonts, amsthm, amsmath, amssymb}
\usepackage{mathrsfs,amsmath}
\usepackage{hyperref}
\usepackage{blindtext}
\hypersetup{colorlinks=false}
\usepackage{comment}

\usepackage[margin=2.6cm]{geometry}
\usepackage{color}

\usepackage{helvet}

\newcommand{\A}{\mathcal{A}}
\newcommand{\B}{\mathcal{B}}

\usepackage{amsthm}
\newtheorem{theorem}{Theorem}
\newtheorem{lemma}{Lemma}

\newtheorem{definition}{Definition}

\newtheorem{proposition}{Proposition}
\newtheorem{corollary}{Corollary}
\newtheorem{example}{Example}

\begin{document}
	
	\author{Suraj Panigrahy}
	
	\title{ Short second moment of $\textrm{GL(2)}$ $L$-functions via Trivial Delta}
	\address{ Suraj Panigrahy \newline { Department of Mathematics and Statistics, Indian Institute of Technology, Bombay, India; \newline  
    Email: suraj.grahy12@gmail.com  
	} }		
	\subjclass[2010]{Primary 11F66, 11M41; Secondary 11F55.}
	\date{\today}
	\keywords{Cusp forms, subconvexity, Rankin-Selberg $L$-functions.}
\maketitle
    
    \begin{abstract}
		Let $f$ be a holomorphic cusp form for $SL(2,\mathbb{Z})$. In this paper, we prove for $T^{\frac{1}{3}} \ll M \ll T^{\frac{1}{2}}$, the following short interval second moment
        \begin{equation*}
           \int_T^{T+M}\left|L\left(\frac12+it,f\right)
           \right|^2 dt \ll_{f,\epsilon}
      T^\epsilon \left(    M + \frac{T}{M}+\sqrt{TM}\right).
        \end{equation*}
    The proof uses the trivial delta method along with conductor lowering technique.\end{abstract}
    \setcounter{tocdepth}{1}
\tableofcontents
\section{Introduction}\label{one}
The study of moments of $L$-functions is a central topic in analytic number theory. They measure the average size of an $L$-function on the critical line and have deep connections with subconvexity, non-vanishing, equidistribution of Fourier coefficients, and random matrix theory.
For the Riemann zeta-function, Hardy and Littlewood~\cite{Hali} proved in 1918 that
\begin{equation*}
    \int_{0}^{T}\left|\zeta\!\left(\frac12+it\right)\right|^2\,dt \sim T \log T,
\end{equation*}
establishing the first asymptotic formula for the second moment on the critical line. A decade later Ingham~\cite{Ing} obtained the asymptotic formula
\begin{equation*}
    \int_{0}^{T} \left|\zeta\left(\frac{1}{2}+it\right) \right|^4 dt \sim \frac{T}{2 \pi^2} \log^4 T,
\end{equation*}
and Heath-Brown~\cite{HB} later refined this by proving an asymptotic expansion with a power saving error term. Precisely he showed that
\begin{equation*}
    \int_{0}^{T} \left|\zeta\left(\frac{1}{2}+it\right)\right|^4 dt = \frac{T}{2 \pi^2} P_4(\log T) + O(T^{\frac{7}{8}+\epsilon}).
\end{equation*}
where $P_4$ is an explicit polynomial of degree $4$. Conrey and Ghosh~\cite{ConGho} conjectured that
\[
\int_0^T
\left|\zeta\!\left(\frac12+it\right)\right|^{2k}dt
\sim
c_k\,T(\log T)^{k^2},
\]
for suitable constants \(c_k\). Understanding the behaviour of $L$-functions on intervals whose length is smaller than $T$ is significantly more delicate. Over long intervals the oscillatory nature of the coefficients naturally produces substantial cancellation, whereas over short intervals much less cancellation is available. This philosophy was introduced by Iwaniec~\cite{IW1979} in his influential work on short moments of the Riemann zeta-function.

Rather than studying the fourth moment over the entire interval $[0,T]$, he proved 
\begin{align*}
    \int_{T}^{T+M} \left| \zeta\left(\frac{1}{2}+it\right)\right|^4 \,dt \ll MT^\epsilon \qquad \text{for }  M \geq T^{\frac{2}{3}+\epsilon}.
\end{align*}
His work revealed that short moment estimates naturally lead to subconvexity bounds through positivity. This idea has become one of the standard approaches to subconvexity: instead of estimating an individual value of an $L$-function directly, one estimates its average over a short interval and then deduces pointwise bounds.

Let $f$ be a holomorphic Hecke cusp form for $SL(2,\mathbb{Z})$ with normalized Hecke eigenvalues $\lambda_f(n)$. The associated $L$-function is defined by
\begin{equation*}
L(s,f)=\sum_{n=1}^{\infty}\frac{\lambda_f(n)}{n^s},
    \qquad \Re(s)>1.
\end{equation*}
In this paper we are interested in bounding the following short second moment
\begin{equation*}
S_f(M,T)=\int_T^{T+M}\left|L\left(\tfrac12+it,f\right)\right|^2 \,dt.
\end{equation*}
Such an estimate follows immediately from the asymptotic formula of Good~\cite{Good}. He investigates the second moment of $L(s,f)$ and proves the asymptotic formula
\[
\int_0^T
\left|
L\!\left(\frac{1}{2}+it,f\right)
\right|^2dt
=
2aT(\log T+b)
+
O((T\log T)^{2/3}),
\]
for suitable constants $a$ and $b$. Good's work may also be viewed as a cuspidal analogue of Iwaniec's treatment of short moments of the Riemann zeta-function. Good's argument is based on the spectral theory of automorphic forms. After applying the approximate functional equation, he studies shifted convolution sums by introducing an associated shifted Dirichlet series and analyzing its spectral decomposition via the theory of automorphic forms, which may be viewed as an application of the Kuznetsov trace formula. 

The main result of the paper is
\begin{theorem}\label{theorem1}
Let
\[
T^{1/3}\ll M \ll T^{1/2}.
\]
Then
\[
S_f(M,T)
\ll_{f,\epsilon}
T^\epsilon\left(M+\frac{T}{M}+\sqrt{TM}\right).
\]

In particular, taking $M=T^{1/3}$, we obtain
\[
\int_T^{T+T^{1/3}}
\left|
L\!\left(\frac12+it,f\right)
\right|^2dt
\ll_{f,\epsilon}
T^{2/3+\epsilon}.
\]
\end{theorem}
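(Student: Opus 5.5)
We bound $S_f(M,T)$ by inserting a smooth weight and opening the square; the off-diagonal terms then reduce to shifted convolution sums, which we treat with the trivial delta method after lowering the conductor. Concretely, let $V$ be a smooth bump with $V\ge 0$ on $[T,T+M]$, supported in $[T-M,T+2M]$, with $V^{(j)}\ll M^{-j}$. Then $S_f(M,T)\le \int V(t)|L(\tfrac12+it,f)|^2dt$.

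\textbf{Step 1: reduction to dyadic bilinear sums.} By the approximate functional equation (analytic conductor $\asymp T^2$), $L(\tfrac12+it,f)$ is a sum of $O(\log T)$ dyadic pieces $\sum_{n\sim N}\lambda_f(n)n^{-1/2-it}W(n/N)$ with $N\ll T^{1+\epsilon}$, plus dual terms of the same shape. Cauchy--Schwarz reduces matters to bounding, for each $N\le T^{1+\epsilon}$,
\[
\mathcal{S}(N)=\frac{1}{N}\sum_{m,n\sim N}\lambda_f(m)\overline{\lambda_f(n)}\,W\!\left(\tfrac mN\right)\overline{W}\!\left(\tfrac nN\right)\int V(t)\left(\tfrac nm\right)^{it}dt .
\]
The diagonal $m=n$ contributes $\ll M\,T^{\epsilon}$ by Rankin--Selberg. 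Repeated integration by parts in $t$ shows the integral is negligible unless $|m-n|\ll N T^{\epsilon}/M$. Hence, when $N\ll MT^{-\epsilon}$, only the diagonal survives.

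\textbf{Step 2: conductor lowering via trivial delta.} For larger $N$, write $m=n+h$ with $1\le |h|\le H:=NT^\epsilon/M$. Detect $m=n+h$ by the trivial delta, i.e.\ additive characters modulo a prime-sized $p$, combined with a Fourier integral in a variable $v$ of size $\ll N/(pH)$. This is the conductor-lowering device: it keeps the $t$-integral as a genuine oscillatory weight $(1+h/n)^{it}$ of size $T$, while imposing $|m-n|=h$. Then apply Voronoi summation to both the $m$ and $n$ sums (modulus $p$, level one). The dual lengths are $\approx p^2 T^2/N$ in the worst case. We choose $p$ so that the dual sums become short and the character sum collapses to a Ramanujan-type sum. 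Alternatively, evaluate the $t$-integral first by stationary phase; this localises $h$ against $n$ in a window of relative width $T/M^2$.

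\textbf{Step 3: bounding the off-diagonal.} After Voronoi, apply Cauchy--Schwarz and Rankin--Selberg to the dual sums, with the trivial bound on the character and Fourier integrals. The target is an off-diagonal bound $\ll T^\epsilon(T/M+\sqrt{TM})$ uniformly in $N\le T^{1+\epsilon}$. The term $T/M$ should come from $N\approx T$ with the dual variable near the stationary point, and $\sqrt{TM}$ from balancing $p$ against the length of the $v$-integral. The restriction $T^{1/3}\ll M\ll T^{1/2}$ should enter here, both in choosing $p$ optimally and in ensuring that the $v$-integral does not dominate. Summing over dyadic $N$ then gives the theorem.

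The main obstacle is Step 3, specifically the joint analysis of the stationary phase in $t$ and the Voronoi-transformed Bessel integrals in the critical range $N\asymp T$. The first attempt would be to separate variables by a second-derivative bound on the combined phase, then optimise the delta modulus.
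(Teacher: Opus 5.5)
Your Step 1 is the paper's reduction: approximate functional equation, smooth $t$-weight of length $M$, diagonal $\ll MT^{\epsilon}$, off-diagonal confined to $0<|h|\ll NT^{\epsilon}/M$. Steps 2--3, however, contain no argument, and the one concrete mechanism you name for Step 3 cannot reach the bound. Count the savings as the paper does. Use the trivial delta with modulus $Q$ and $v$-integral of length $K$. Conductor lowering means $K\asymp T/M$, the total oscillation of $(1+h/n)^{iT}$ in $n$, and the delta needs $QK\gg NT^{\epsilon}$. Then:
\begin{itemize}
\item Poisson in $h$ saves $H/\sqrt{QK}$;
\item each Voronoi saves $N/(QK)$;
\item the $a$-sum and the $v$-integral save at best $\sqrt{Q}$ and $\sqrt{K}$.
\end{itemize}
Starting from the trivial size $N^2H$, you are left with $Q^2K^2\gg N^2$. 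This is worse than the trivial bound $S(H,N)\ll N^{1+\epsilon}H\asymp N^2/M$. So ``Cauchy--Schwarz and Rankin--Selberg on the dual sums, with trivial bounds on the character sums and integrals'' gives nothing, whatever $p$ you choose. Your parameters also show the conductor has not actually been lowered. The dual length should be $Q^2K^2/N\asymp N$, not $p^2T^2/N$, and $N/(pH)$ is not the right scale for $v$.

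The missing ingredient is an extra saving of size $\min\{Q^2K/H,\;Q^2K^{3/2}/N\}$, extracted after Cauchy--Schwarz. The paper obtains it as follows.
\begin{itemize}
\item A stationary-phase analysis of the transformed integrals produces the explicit phase $e(\mp 2\sqrt{nT/(2\pi qh)})$ alongside $e(\bar h n/q)$.
\item Fourier inversion separates $m$ and $n$ in the weights, and Cauchy--Schwarz is applied over $(q,h)$.
\item The duality principle then moves the $n$-sum outside, with arbitrary coefficients $\alpha(q,h)$.
\item Opening the square and applying Poisson in $n$ modulo $qq'$ reduces matters to GL(1) exponential sums.
\end{itemize}
In that last step, the diagonal $q'h'=qh$ produces the $T/M$ term. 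The nonzero frequencies with $q\neq q'$ produce the $\sqrt{TM}$ term; they are handled by the second-derivative test for the phase $\mathcal{A}t-\mathcal{B}\sqrt{t}$ together with counting $h'$ in a progression modulo $q'$. The zero frequency forces $q=q'$, and it is negligible by integration by parts precisely because $M\gg T^{1/3}$. None of this appears in your sketch. Your guesses for where $T/M$ and $\sqrt{TM}$ come from do not correspond to steps that generate these savings. You would also need to dispose of $N\ll T^{2/3+\epsilon}$ by the trivial bound $S(H,N)\ll N^{1+\epsilon}H$, which contributes $\ll T^{2/3+\epsilon}\le T^{\epsilon}\sqrt{TM}$.
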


As an immediate consequence of Theorem~\ref{theorem1} and the positivity of the integrand, we recover the classical Weyl bound.

\begin{corollary}\label{theorem2}
We have
\[
L\!\left(\frac12+iT,f\right)
\ll_{f,\epsilon}
T^{1/3+\epsilon}.
\]
\end{corollary}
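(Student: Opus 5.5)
The statement to prove is Corollary~\ref{theorem2}. It follows from Theorem~\ref{theorem1} with $M=T^{1/3}$, once we have a local mean-value inequality bounding $|L(\frac12+iT,f)|^2$ by an average of $|L|^2$ over a short window around $T$. The plan is to derive such an inequality from the approximate functional equation and then insert the moment bound.

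\textbf{Step 1: a smoothed approximate functional equation.} For $t\asymp T$ the analytic conductor of $L(s,f)$ is $\asymp t^2$. Shifting contours in
\[
\frac{1}{2\pi i}\int_{(2)} L\left(\tfrac12+iT+w,f\right)G(w)\frac{dw}{w},
\qquad G(w)=e^{w^2},
\]
and applying the functional equation gives
\[
L\left(\tfrac12+iT,f\right)\ll \int_{-\log T}^{\log T}\left|L\left(\tfrac12+iT+iv,f\right)\right|\,dv + O(T^{-A}).
\]
The steps are:
\begin{itemize}
\item[(a)] Move the $w$-contour to $\Re w=\pm\epsilon$, picking up the residue at $w=0$.
\item[(b)] On $\Re w=-\epsilon$, use the functional equation and Stirling to compare with the dual integral; the gamma ratio contributes only $T^{O(\epsilon)}$.
\item[(c)] Push both lines back to $\Re w=0$. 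This is legitimate because $L(s,f)$ is entire and polynomially bounded in vertical strips.
\item[(d)] Truncate using the rapid decay of $G$, which confines the integral to $|v|\le \log T$.
\end{itemize}
This is the standard Gallagher--Heath-Brown device, and the result is
\[
\left|L\left(\tfrac12+iT,f\right)\right|\ll T^{\epsilon}\int_{|v|\le \log T}\left|L\left(\tfrac12+i(T+v),f\right)\right|dv+T^{-A}.
\]

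\textbf{Step 2: Cauchy--Schwarz and Theorem~\ref{theorem1}.} Applying Cauchy--Schwarz to the integral over $v$ gives
\[
\left|L\left(\tfrac12+iT,f\right)\right|^2\ll T^{\epsilon}\int_{T-\log T}^{T+\log T}\left|L\left(\tfrac12+it,f\right)\right|^2dt.
\]
By positivity of the integrand, this is at most
\[
T^{\epsilon}\int_{T_0}^{T_0+M}\left|L\left(\tfrac12+it,f\right)\right|^2dt
\quad\text{with } T_0=T-\log T,\ M=T^{1/3},
\]
since $M\ge 2\log T$. Theorem~\ref{theorem1} holds uniformly for $T_0\asymp T$ and $M=T_0^{1/3}$, which lies in the admissible range, so the integral is $\ll T^{2/3+\epsilon}$. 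Indeed, with $M=T^{1/3}$ each of the three terms $M$, $T/M$ and $\sqrt{TM}$ is $\le T^{2/3}$. Taking square roots yields $L(\tfrac12+iT,f)\ll T^{1/3+\epsilon}$.

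\textbf{Main obstacle.} The only delicate point is Step 1, namely justifying the local mean-value inequality. We need uniform control of the gamma factor ratio on $\Re w=\pm\epsilon$, and we need the contour shift to be legitimate. Both follow from Stirling's formula together with the entirety and polynomial growth of $L(s,f)$ in vertical strips. Everything after that is immediate.
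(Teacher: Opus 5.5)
\textbf{Summary.} The overall architecture is the paper's: a local mean-value inequality, then positivity, then Theorem~\ref{theorem1} with $M=T^{1/3}$. The paper does not prove that inequality; it quotes an analogue of Lemma~3.5 of Aggarwal--Leung--Munshi. Your Step~2 is fine: Cauchy--Schwarz, enlarging to $[T_0,T_0+M]$ with $T_0=T-\log T$, and checking $M+T/M+\sqrt{TM}\le 3T^{2/3}$. The gap is in Step~1, which you rightly single out, and specifically in (c).

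\textbf{Where (c) fails.} Write $I_c=\frac{1}{2\pi i}\int_{(c)}L(\tfrac12+iT+w,f)G(w)\,\frac{dw}{w}$. Step (a) gives $L(\tfrac12+iT,f)=I_\epsilon-I_{-\epsilon}$. Steps (b) and (d) then legitimately give
\[
|L(\tfrac12+iT,f)|\ll_\epsilon T^{2\epsilon}\int_{|v|\le\log T}\bigl|L(\tfrac12+\epsilon+i(T+v),f)\bigr|\,dv+T^{-A}.
\]
This bound lives on $\Re s=\frac12+\epsilon$, where Theorem~\ref{theorem1} says nothing. Step (c) cannot bring it back to the critical line, for two reasons.
\begin{enumerate}
\item The functional equation in (b) rewrites the integrand but does not change the meromorphic function $L(\tfrac12+iT+w,f)G(w)/w$. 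That function has a simple pole at $w=0$, on the line $\Re w=0$, with residue $L(\tfrac12+iT,f)$. Putting both contours on $\Re w=0$ with indentations or principal values just returns the tautology $L(\tfrac12+iT,f)=L(\tfrac12+iT,f)$.
\item If you take absolute values instead, you get $\int|L(\tfrac12+i(T+v),f)|e^{-v^2}\,dv/|v|$. This diverges at $v=0$ unless $L(\tfrac12+iT,f)=0$.
\end{enumerate}
Entirety and polynomial growth of $L$ control the horizontal segments, not this pole. So the inequality you feed into Step~2 is not established by the argument you give.

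\textbf{How to repair Step~1.} You need an entire kernel on the final line, and you get one by passing through Dirichlet polynomials first.
\begin{enumerate}
\item By the approximate functional equation and the dyadic decomposition of Section~\ref{five}, $L(\tfrac12+iT,f)$ equals, up to $O(T^{-A})$, a sum of $O(\log T)$ pieces $\sum_n\lambda_f(n)n^{-1/2-iT}W_N(n/N)$ plus their duals. Here $W_N$ is compactly supported in $(0,\infty)$ and $x^jW_N^{(j)}(x)\ll_j1$ uniformly.
\item For each piece, Mellin inversion on $\Re w=1$ followed by a shift to $\Re w=0$ gives $\frac{1}{2\pi i}\int_{(0)}L(\tfrac12+iT+w,f)\widetilde W_N(w)N^{w}\,dw$. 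No pole is crossed because $\widetilde W_N$ is entire, and $\widetilde W_N(iv)\ll_A(1+|v|)^{-A}$ uniformly.
\item The dual pieces are handled the same way, using $|L(\tfrac12-i(T-v),\bar f)|=|L(\tfrac12+i(T-v),f)|$.
\end{enumerate}
This yields $|L(\tfrac12+iT,f)|\ll T^{\epsilon}\int_{|v|\le T^{\epsilon}}|L(\tfrac12+i(T+v),f)|\,dv+T^{-A}$. The window $T^{\epsilon}$ in place of $\log T$ is harmless since $M=T^{1/3}$, and your Step~2 then goes through unchanged. Alternatively, cite the Aggarwal--Leung--Munshi lemma directly, as the paper does.
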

Our starting point is the approximate functional equation, which reduces the estimation of $S_f(M,T)$ to estimating  shifted convolution sums of the form
\begin{equation}\label{shn}
S(H,N)
=
\sum_{h\sim H}
\sum_{n\sim N}
\lambda_f(n)\lambda_f(n+h)
\left(1+\frac{h}{n}\right)^{iT}.
\end{equation}
The main novelty of this paper is an elementary treatment of the resulting shifted convolution sum. We separate the oscillations using the  trivial delta method. The resulting sums are then transformed through Poisson summation and the Voronoi summation formula, after which the integrals transforms are analysed by repeated integration by parts. Our approach is technically simpler while still yielding the Weyl exponent.
Using Lemma~\ref{ramanujan} together with Cauchy's inequality, one obtains the trivial estimate $
S(H,N)\ll N^{1+\varepsilon}H$. Combined with Equation~\eqref{combi}, this is already sufficient whenever $H\ll T^{1/6}$. For the remaining range, we establish the following bound. In fact, our argument proves the stronger statement that, for $
T^\epsilon \ll H \ll NT^\epsilon/M$. we have the following estimate.  
\begin{proposition}\label{boundshn}
Let $S(H,N)$ be the shifted convolution sum defined in equation \eqref{shn}.For $N\ll T^{1+\epsilon}$,
\[
S(H,N)
\ll
\begin{cases}
NHT^\epsilon
& 
\text{ for }N \ll T^{2/3+\epsilon},
\\[1.2ex]
NT^\epsilon \left(
\frac{\sqrt{T}}{\sqrt{M}}
+
\frac{T}{M^2}
\right)
&\text{ for }
T^{2/3+\epsilon} \ll N \ll T^{1+\epsilon}.
\end{cases}
\]
\end{proposition}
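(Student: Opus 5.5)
The first case, $N\ll T^{2/3+\epsilon}$, needs no work: it is the trivial bound already noted, $S(H,N)\ll N^{1+\epsilon}H$ (Lemma~\ref{ramanujan} and Cauchy). So the plan concerns only the range $T^{2/3+\epsilon}\ll N\ll T^{1+\epsilon}$ with $T^\epsilon\ll H\ll NT^\epsilon/M$. Here I would use the trivial delta method, i.e. separate $n$ from $m=n+h$ by writing the condition $m=n+h$ as $\delta(m-n-h)$ with
\[
\delta(r)=\frac{1}{L}\sum_{\ell\sim L}\ e\!\left(\frac{\ell r}{L}\right)\quad\text{(schematically)},\qquad\text{or}\qquad \delta(r)=\int_0^1 e(r x)\,dx ,
\]
after first restricting to $|m-n-h|$ small by smooth weights. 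The conductor lowering trick is the point: rather than detect $m=n+h$ exactly, we use that $m-n=h$ with $h\sim H$ already free. So I would write
\[
S(H,N)=\int_{\mathbb R}\sum_{n\sim N}\lambda_f(n)e(-nx)\sum_{m}\lambda_f(m)e(mx)\Big(\tfrac mn\Big)^{iT}\sum_{h\sim H}e(-hx)\,W\,dx .
\]
The $h$-sum then localizes $x$ to $|x|\ll 1/H$ up to a tail. Equivalently, I would Fourier-expand the smooth $h$-weight so that the problem becomes a bilinear form in $\lambda_f(n)$ and $\lambda_f(m)$ with the phase $(m/n)^{iT}$. This phase has size $T$ but varies on the scale $N/T$ in each variable.

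Next I would split $x=a/q+\beta$ with $q\le Q$ by a Dirichlet approximation. In its trivial form one only needs $q=1$, giving twisted sums $\sum_n\lambda_f(n)e(\beta n)n^{-iT}$. I would apply GL(2) Voronoi summation to each of the $n$- and $m$-sums. For a smooth weight of length $N$ with phase $n^{\pm iT}e(\beta n)$, Voronoi turns the sum into a dual sum of length $\approx (T+N/H)^2/N$. Stationary phase then gives a factor $N^{1/2}(\text{dual length})^{-1/4}\cdot(\text{phase})^{-1/2}$ per variable. Poisson in $h$ (or the trivial $\delta$ integral) produces the extra averaging that gives the savings. After evaluating the resulting oscillatory integral in $x$ by repeated integration by parts and stationary phase, the two dual sums become a short dual shifted sum of the shape $\sum_{n'}\lambda_f(n')\lambda_f(n'+h')$ with new lengths $N'\approx T^2/N$ and $H'\approx$ (something like $NH/M\cdot T/N$). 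I would bound this with Cauchy and Lemma~\ref{ramanujan}. Bookkeeping the normalizing factors should give the two terms $N\sqrt{T/M}$, which arises from the balanced stationary point, and $NT/M^2$, which arises from the boundary range of $H\approx N/M$.

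The main obstacle I expect is the analysis of the integral transforms after double Voronoi and Poisson. One must show that outside a short range of the dual variables the triple integral in $(x,u,v)$ is negligible by integration by parts. Inside that range, one must extract enough oscillation, namely square-root cancellation in $x$, so that the trivial bound on the dual sum suffices. I would first try to handle this by computing the stationary point in $x$ explicitly from the phase $T\log(1+h/n)$. Then I would use the second-derivative test in the remaining variables, checking that the weight of $h\sim H$ with $H\gg T^\epsilon$ supplies the needed non-stationarity when $N\gg T^{2/3}$.
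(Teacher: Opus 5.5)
Your first case is fine. For $T^{2/3+\epsilon}\ll N\ll T^{1+\epsilon}$, however, the plan has a genuine gap: no step in it saves anything. You keep only $q=1$, i.e.\ the integral over $x$ localized to $|x|\ll T^\epsilon/H$ with $e(mx)$ absorbed into the weight. Then GL(2) Voronoi applied to $\sum_m\lambda_f(m)m^{iT}e(mx)U(m/N)$ is just the functional equation of $L(s,f)$ at height $T$.

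Take the generic point $N\asymp T$, $H\asymp T/M$. There the stationary point puts the dual variable at $m'\asymp T^2/N\asymp T$, with no change in normalization. To first order in $x$, the dual phase is $-\frac{T}{2\pi}\log m'+x\,\frac{T^2}{4\pi^2m'}$; the quadratic term has size $N^2/(TH^2)\asymp M^2/T\ll 1$. The $x$-integral against $\sum_h W(h/H)e(-hx)$ then only imposes $\frac{T^2}{4\pi^2}\bigl|\frac{1}{m'}-\frac{1}{n'}\bigr|\asymp H$. So you land on a shifted convolution sum of exactly the same type and the same dimensions: length $T^2/N\asymp T$, shifts $HT^2/N^2\asymp H$. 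The transformation is essentially an involution.

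Bounding this dual sum by Cauchy and Lemma~\ref{ramanujan} therefore returns the trivial bound $NH\asymp T^2/M$, not $T^{3/2}M^{-1/2}+T^2M^{-2}$. For $N<T$ it is even worse, since it gives $HT^2/N\ge NH$. In your setup Poisson in $h$ only produces the localization of $x$, not extra averaging. So the ``bookkeeping'' yielding $N\sqrt{T/M}+NT/M^2$ cannot come out.

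What is missing is conductor lowering with a genuine family of moduli, followed by a nontrivial treatment of the resulting dual sums. In the paper:
\begin{itemize}
\item The condition $m=n+h$ is detected with primes $q\in[Q,2Q]$, $Q\asymp NT^\epsilon/K$, together with a $v$-integral of length $K=T/M$.
\item Poisson in $h$ gives a dual $h\asymp TQ/N$ with $(h,q)=1$.
\item Voronoi in $m$ and $n$ modulo $q$ gives sums of length $K^2Q^2/N$, twisted by $e(\pm\overline{h}n/q)$ and $e(\mp 2\sqrt{nT/2\pi qh})$.
\end{itemize}
The saving comes only after this, from Cauchy over $(q,h)$, the duality principle, and Poisson in $n$ modulo $qq'$:
\begin{itemize}
\item The diagonal $qh=q'h'$ gives the $NT/M^2$ term; it is $HK$ with $H\ll N/M$.
\item The off-diagonal $q\ne q'$, treated with Lemma~\ref{secondd}, gives $N\sqrt{T/M}$.
\item The remaining terms ($n=0$, and $n\neq 0$ with $q=q'$) are shown to be negligible.
\end{itemize}
Your final step, a trivial bound for a single dual shifted sum, must be replaced by this kind of cancellation across the $(q,h)$-family. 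If you reinstate all $q\le Q$ from your Dirichlet dissection you are in a Jutila-type circle method, but the same need to exploit cancellation over the family of dual sums remains.
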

The remainder of the paper is organized as follows. In Section~\ref{two}, we outline the main ideas of the proof. Sections~\ref{three} and~\ref{four} collect the necessary background on holomorphic cusp forms and the analytic tools used throughout the paper. In Section~\ref{five}, we reduce the estimation of the short second moment to shifted convolution sums. The remaining sections establish the required estimate for the shifted convolution sums using the trivial delta method, Poisson summation, the Voronoi summation formula, and oscillatory integral estimates. We complete the proofs of Proposition~\ref{boundshn} and Theorem~\ref{theorem1} in Section~\ref{twelve}.
\subsection{Notations-} For any complex number $z$ we set $e(z) := e^{2\pi iz}$. $q$ and $q'$ are primes unless stated otherwise. By $q \in \mathfrak{Q}$, we mean $q$ is a prime number in the interval $[Q,2Q]$. By $A \asymp B$ we mean that $T^{-\epsilon}B \le A \le T^{\epsilon}B$. 
By $A \approx B$ we mean $c_1A \le B \le c_2A$ for some positive real $c_1, c_2$. By $A \sim B$ we mean that $B \le A < 2B$. 
By the notation $X \ll Y$ we mean that for any $\epsilon > 0$, there is a constant $c > 0$ such that $|X| \le cY$. The implied constants may depend on the cusp form $f$ and $\epsilon$. At various places the choice of $\epsilon > 0$ may be different.
\section{Outline of the Proof}\label{two}
For simplicity, we focus on the generic case $N=T$ and $H=T/M$. We invoke the trivial delta method in the following form:
\begin{equation*}
    \delta(n+h,m)=\frac{1}{|\mathfrak{Q}|} \sum_{q \in \mathfrak{Q}} \frac{1}{q} \sum_{a \,\text{mod}\, q} e\left(\frac{a(n+h-m)}{q}\right) \int_{K}^{2K} e\left(\frac{v(n+h-m)}{N}\right), 
\end{equation*}
to separate the oscillations $\lambda_f(m)$ and $\lambda_f(n) \left(1+\frac{h}{n}\right)^{iT}$ from the expression of $S(H,N)$. The contribution from the term $a=0$ is $
O\!\left(N^2H/Q\right)$, which is $O(T^{-A})$ for any $A>0$ upon choosing $Q$ sufficiently large. Hence it suffices to consider the contribution from $a \neq 0$, which is given by
\begin{align*}
      S(H,N)= \frac{1}{|\mathfrak{Q}|}\sum_{q \in \mathfrak{Q}}\frac{1}{q} \sideset{}{^*}\sum_{a \,\text{mod}\, q} \frac{1}{K} & \int \sum_h e\left( \frac{ah}{q}\right) e\left( \frac{hv}{N}\right)   \sum_n \lambda(n) e\left(\frac{an}{q}\right) e\left( \frac{nv}{N}\right)\left(1+\frac{h}{n}\right)^{iT}   \\
    &\times \sum_m\lambda(m) e\left( \frac{-ma}{q}\right) e\left( \frac{-mv}{N}\right) \mathrm{d}v.
   \end{align*}
\subsection{Dual Summation Formulas}
After the separating the variables with the delta method, we apply Poisson Summation to the $h$-sum, The initial length is $H.$ The Poisson transform has conductor $QK$, so
\begin{align*}
\text{dual length}
&=
\frac{\text{conductor}}{\text{initial length}}
=
\frac{KQ}{H},
&
\text{saving}
&=
\frac{\text{initial length}}{\sqrt{\text{conductor}}}
=
\frac{H}{\sqrt{KQ}}.
\end{align*}
Applying the $GL(2)$ Voronoi summation formula to both the $m$ and $n$ sums. In either summation, Each of them have initial length$=N$. Thus
\begin{align*}
    \text{dual length}&=\frac{\text{conductor}}{\text{initial length}} = \frac{Q^2K^2}{N}, &
\text{saving}&=\frac{\text{initial length}}{\sqrt{\text{conductor}}}=\frac{N}{QK}.
\end{align*}
\subsection{Analysis of Integral Transform}
After analysing the oscillatory integral arising from the Voronoi transform, the sum takes the form(upto normalizing factors)
\begin{align*}
     \sum_{q \in \mathfrak{Q}} \sum_{\substack{ h \sim \frac{KQ}{H} }}  \sum_{ m,n \asymp \frac{K^2q^2}{N}}   \lambda_f(n)  \lambda_f(m) \left(1+\frac{h}{n}\right)^{iT}  e\left(  \frac{-2 \sqrt{nT}}{\sqrt{2 \pi q h}}  \right) e\left(  \frac{2 \sqrt{mT}}{\sqrt{2 \pi q h}}  \right) e\left(\frac{\overline{h}n}{q}\right)e\left(-\frac{\overline{h}m}{q}\right).
\end{align*}
\subsection{$a \, \text{ mod } \, q$ and $v$-integral saving}
We save $\sqrt{Q}$ from the $a \, \text{ mod } \, q$ sum and $\sqrt{K}$ from the $v$-integral.
\subsection{Cauchy's Inequality}
Applying the Cauchy inequality separates the $m$- and $n$-variables, reducing the problem to estimating exponential sums of the form
\[
\sum_{\substack{q \asymp Q}} \sum_{\substack{h \asymp \frac{KQ}{H}}}
\left|
\sum_{n \asymp \frac{K^2Q^2}{N}}
e\left(\frac{\overline{h}n}{q}\right) e\left(-\frac{2\sqrt{nT}}{\sqrt{2\pi qh}}\right)
\right|^2.
\]
The phase $\frac{\overline{h}n}{q}$ is the arithmetic oscillation arising from the Voronoi summation formula, whereas the phase $-\frac{2\sqrt{nT}}{\sqrt{2\pi qh}}$, whose generic size is $K$, originates from the analysis of the integral transform.
\subsection{Duality principle}
We apply duality principle to reduce the problem from GL$(2)$ to GL$(1)$. We bring out the $n$ sum outside, the expression takes the form before we expand the absolute value squared for the expression
\begin{equation*}   \sup_{\| \alpha \|_2 =1} \sum_{\substack{ n \asymp \frac{K^2Q^2}{N} } } \Big|
    \sum_{\substack{q \\ q \asymp Q}} \sum_{\substack{h \\ h \asymp \frac{KQ}{H}}} \alpha(q,h)
    e\left( \frac{\overline{h}n}{q}- \frac{2 \sqrt{nT}}{\sqrt{2 \pi q h}}\right)\Big|^2.
\end{equation*}
\subsection{Poisson Summation}
In the zero frequency, we save the size of the diagonal which is precisely $\frac{KQ^2}{H}$ meanwhile in the nonzero frequency we save $\frac{K^2Q^2}{N\sqrt{K}}$ due to the presence of the additive characters in the $n$-sum.
Thus overall we save $$\min\left\{\frac{Q^2K}{H}, \frac{Q^2K^2}{N\sqrt{K}}\right\}.$$ because of the Poisson summation on $n$.
\subsection{Conclusion}
Combining the savings from the preceding steps gives an total saving of
\begin{equation*}
N^2H \min\left\{\frac{1}{HK}, \frac{1}{N \sqrt{K}}\right\}.
\end{equation*}
Thus we get $S(H,N) \ll HK+N\sqrt{K}$. Upon substituting $K=T/M$ with $H \ll N/M$ yields the following bound on the shifted convolution sum, given by
\begin{equation*}
      S(H,N) \ll_{f, \epsilon}  NT^\epsilon \left(
\frac{\sqrt{T}}{\sqrt{M}}
+
\frac{T}{M^2}
\right).
\end{equation*} This proves Proposition \ref{boundshn}.

\section{Preliminaries on Automorphic Forms}\label{three}
\subsection{Holomorphic Cusp Forms}
Let $k\geq2$ be an even integer and let $N\geq1$. Denote by
$S_k(N)$ the space of holomorphic cusp forms of weight $k$ and
level $N$. Every $f\in S_k(N)$ admits a Fourier expansion
\[
f(z)=\sum_{n=1}^{\infty}\psi_f(n)n^{\frac{k-1}{2}}e(nz),
\]
the absence of the constant term reflecting the cuspidality of $f$. The space $S_k(N)$ is equipped with the Petersson inner product and
admits an orthogonal basis consisting of simultaneous eigenfunctions of
the Hecke operators. Let $f\in S_k(N)$ be a normalized Hecke eigenform satisfying
$\psi_f(1)=1$.
Its Hecke eigenvalues are given by $\lambda_f(n)=\psi_f(n)$
and satisfy $\lambda_f(mn)=\lambda_f(m)\lambda_f(n)$ for $ (m,n)=1.$
The $L$-function associated to $f$ is defined by
\[
L(s,f)=\sum_{n=1}^{\infty}\frac{\lambda_f(n)}{n^s},
\qquad \text{ for }
\Re(s)>1.
\]
By Deligne's bound, this Dirichlet series converges absolutely in the
half-plane $\Re(s)>1$ and admits the Euler product $
L(s,f)=
\prod_p
(1-\lambda_f(p)p^{-s}
+p^{-2s})^{-1}.
$
The completed $L$-function satisfies a functional equation relating the values at $s$ and $1-s$. In the $t$-aspect, its analytic conductor is
of size $(1+|t|)^2$.
\subsection{Approximate Functional Equation} The following approximate functional equation is a standard consequence of the functional equation.
\begin{lemma}[Approximate Functional Equation, \cite{IK}, Theorem-5.3]\label{afe}
Let $L(s,f)$ be an $L$-function. Let $G(u)$ be any function which is holomorphic and bounded in the strip $-4 < \Re(u) < 4$, even, and normalized by $G(0)=1$. Let $X>0$. Then for $s$ in the strip $0 \le \sigma \le 1$ we have
\begin{equation}
L(s,f)=\sum_{n=1}^{\infty} \frac{\lambda_f(n)}{n^s} V_s\!\left(\frac{n}{X\sqrt {q(f)}}\right) +\epsilon(f,s)\sum_{n=1}^{\infty}\frac{\overline{\lambda_f(n)}}{n^{1-s}}V_{1-s}\!\left(\frac{nX}{\sqrt{ q(f)}}\right),
\end{equation}
where $q(f)$ is the conductor of $L(s,f)$ and $V_s(y)$ is a smooth function defined by
\begin{equation}
V_s(y)=\frac{1}{2\pi i}\int_{(3)} y^{-u}G(u)\frac{\gamma(f,s+u)}{\gamma(f,s)}\frac{du}{u},
\end{equation}
and
\begin{equation}
\epsilon(f,s)=\epsilon(f)\,q(f)^{\frac12-s}\frac{\gamma(f,1-s)}{\gamma(f,s)}.
\end{equation}
\end{lemma}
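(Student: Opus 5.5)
The statement just before this point is Lemma~\ref{afe}, the approximate functional equation, quoted from \cite{IK}. I would prove it by the standard contour-shift argument. Put
\[
\Lambda(s,f)=q(f)^{s/2}\gamma(f,s)L(s,f),
\]
which is entire and of finite order (rapid decay in vertical strips via Stirling), and satisfies $\Lambda(s,f)=\epsilon(f)\Lambda(1-s,\bar f)$. Fix $s$ with $0\le\sigma\le1$ and consider
\[
I(X,s)=\frac{1}{2\pi i}\int_{(3)}X^{u}\,G(u)\,\frac{\Lambda(s+u,f)}{u}\,du .
\]
Expand $\Lambda(s+u,f)$ as a Dirichlet series, which converges absolutely since $\Re(s+u)\ge 3$. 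Interchanging sum and integral and factoring out $q(f)^{s/2}\gamma(f,s)$ gives
\[
I(X,s)=q(f)^{s/2}\gamma(f,s)\sum_{n}\frac{\lambda_f(n)}{n^s}\,V_s\!\left(\frac{n}{X\sqrt{q(f)}}\right),
\]
because $X^u q(f)^{u/2}n^{-u}=\bigl(n/(X\sqrt{q(f)})\bigr)^{-u}$. This is exactly the definition of $V_s$.

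Next, shift the contour to $\Re(u)=-3$. Boundedness of $G$ in $|\Re u|<4$, together with the exponential decay of $\gamma(f,s+u)$ in $|\Im u|$ from Stirling, justifies the shift; here we use that $\Lambda$ is entire and that $L$ grows at most polynomially in vertical strips, by Phragmén–Lindelöf. The only pole crossed is at $u=0$. Its residue is $G(0)\Lambda(s,f)=\Lambda(s,f)$.

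On the line $\Re(u)=-3$, substitute $u\mapsto -u$ and apply the functional equation, $\Lambda(s-u,f)=\epsilon(f)\Lambda(1-s+u,\bar f)$. Since $G$ is even and $du/u$ is invariant under $u\mapsto-u$ up to the orientation sign, the shifted integral equals
\[
-\epsilon(f)\,\frac{1}{2\pi i}\int_{(3)}X^{-u}G(u)\,\frac{\Lambda(1-s+u,\bar f)}{u}\,du .
\]
Expanding $\Lambda(1-s+u,\bar f)$ in its Dirichlet series exactly as in the first step gives
\[
\epsilon(f)\,q(f)^{(1-s)/2}\gamma(f,1-s)\sum_n\frac{\overline{\lambda_f(n)}}{n^{1-s}}\,V_{1-s}\!\left(\frac{nX}{\sqrt{q(f)}}\right),
\]
where I use that $\bar f$ has the same gamma factor. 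Combining the two pieces, $\Lambda(s,f)$ equals the first integral plus this second expression.

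Finally, divide through by $q(f)^{s/2}\gamma(f,s)$. The second term acquires the factor $\epsilon(f)q(f)^{1/2-s}\gamma(f,1-s)/\gamma(f,s)=\epsilon(f,s)$, which is the stated formula.

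The only delicate step is justifying the contour shift, namely the uniform growth control of $G(u)\Lambda(s+u,f)/u$ on horizontal segments. I would handle it with Stirling's formula for $\gamma$ combined with convexity bounds for $L$ in the strip.
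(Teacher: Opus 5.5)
Your proposal is correct. It is the standard contour-shift argument: integrate $X^uG(u)\Lambda(s+u,f)/u$ on $\Re u=3$, move the line to $\Re u=-3$ picking up the residue $\Lambda(s,f)$ at $u=0$, then fold the left integral back using $u\mapsto -u$, the evenness of $G$ and the functional equation. This is exactly the proof of Iwaniec--Kowalski, Theorem 5.3, which the paper cites rather than reproving.

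Your remark that $\bar f$ has the same gamma factor is what reconciles IK's $\gamma(\bar f,1-s)$ with the paper's $\gamma(f,1-s)$. It holds here because $f$ has level $1$ and real Hecke eigenvalues, and since $\Lambda$ is entire, IK's residue term vanishes.
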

\subsection{Inert Functions}
We frequently encounter families of smooth functions satisfying uniform derivative bounds. Such families are called \emph{inert functions}. Throughout this subsection, we follow the exposition of Kiral, Petrow and Young~\cite{KIY}, recalling the definition together with several basic examples that will be used later.
\begin{definition}
Let $\mathcal{F}$ be an index set, and let $
F:\mathcal{F}\rightarrow\mathbb{R}_{\ge1}
$
be a function. A family of smooth functions $\{W_T\}_{T \in \mathcal{F}}$ supported on a product of dyadic intervals in $\mathbb{R}^d_{ >0}$ is called $F_T$-inert if, for each $j=(j_1, j_2 \dots j_d) \in \mathbb{Z}^d_{ \geq 0}$, we have
\begin{equation*}
   \sup_{T \in \mathcal{F} } \text{ }  \text{ }\sup_{(x_1, x_2 \dots x_d) \in \mathbb{R}^d_{>0}} F_T^{-|j|} |\mathbf{x}^j  W^{(j)}_T(\mathbf{x})| < \infty,
\end{equation*}
where
\[
|j|=j_1+\cdots+j_d,\qquad
\mathbf{x}=(x_1,\ldots,x_d),\qquad
\mathbf{x}^j=x_1^{j_1}\cdots x_d^{j_d}.
\]    
\end{definition}

We frequently encounter several inert functions throughout the paper, thus we begin with few important examples.
\begin{example}
    Let $T \in \mathbb{R}_{>0}=\mathcal{F}$, Let $W(t)$ be a fixed smooth function supported on $[1,2]$, Then the family $\{W_T\}_{T \in \mathcal{F}}$ where $W_T$ is defined by
    $W_T(t)=W\left(\frac{t}{T}\right)$ is $1$-inert.
\end{example}
\begin{example}\label{dil}
    Let $(T_1,T_2) \in  \mathbb{R}^2_{>0}=\mathcal{F}$, Let $W(t_1,t_2)$ be a fixed smooth function supported on $[1,2] \times [1,2]$, Then the family $\{W_{T_1, T_2}\}_{(T_1,T_2) \in \mathcal{F}}$ where $W_{T_1, T_2}$ is defined by
    $$W_{T_1,T_2}(t_1,t_2)=W\left(\frac{t_1}{T_1},\frac{t_2}{T_2}\right),$$ is $1$-inert.
\end{example}
\begin{example}\label{expo}
    Let $T_\lambda=(T_1,T_2,\dots,T_d,\lambda_1,\lambda_2,\dots,\lambda_d)\in (\mathbb{R}_{>0})^d\times\mathbb{R}^d=\mathcal{F}$. Then the family $\{E_{T_\lambda}\}_{T_\lambda\in\mathcal{F}}$, where $E_{T_\lambda}$ is defined by
    \[
    E_{T_\lambda}(t_1,t_2,\dots,t_d)
    =
    e^{i\lambda_1t_1+i\lambda_2t_2+\cdots+i\lambda_dt_d},
    \]
    is $\max\{|\lambda_1|T_1,|\lambda_2|T_2,\dots,|\lambda_d|T_d\}$-inert.
\end{example}
\begin{example}\label{ex3}
    Let $\{W_T\}_{T \in \mathcal{F}}$ be $F_T$- inert family  and $\{V_{T'}\}_{T' \in \mathcal{F}'}$ be $G_{T'}$- inert, Then the family $$\{W_T \cdot V_{T'}\}_{(T,T') \in \mathcal{F} \times \mathcal{F}'},$$ is $\max\{F_T, G_{T'}\}$ inert.
\end{example}
\begin{example}\label{weightfunction}
    Let $T_\lambda=(T_1,T_2, \dots T_d, \lambda_1, \lambda_2 \dots \lambda_d) \in  \mathbb{R}^d_{>0} \times \mathbb{R}^d=\mathcal{F}$, Let $W(t_1,t_2 \dots t_d)$ be a fixed smooth function supported on $[1,2]^d$, Then the family $\{W_{T_\lambda}\}_{T_\lambda \in \mathcal{F}}$ where $W_{T_\lambda}$ is defined by
    $$W_{T_\lambda}(t_1,t_2 \dots t_d)=e^{i\lambda_1 t_1+i\lambda_2 t_2 \dots i\lambda_d t_d} \text{ } W\left(\frac{t_1}{T_1},\frac{t_2}{T_2} \dots \frac{t_d}{T_d} \right),$$ is $\max\{1, |\lambda_1| T_1, |\lambda_2| T_2, \dots |\lambda_d| T_d\}$ inert. This follows immediately from Examples ~\ref{dil},  ~\ref{expo} and~\ref{ex3}.
    
\end{example}
\begin{example}\label{fourier}
Let $\{W_T\}_{T\in\mathcal{F}}$ be an $F_T$-inert family which is supported on $[1,2]$. Define
\[
\widehat{W}_T(\xi)=\int_{\mathbb R}W_T(t)e(-t\xi)\,dt.
\]
 Then an application of intergation by parts shows that the family $\{\widehat{W}_T\}_{T \in \mathcal{F}}$ satisfies the derivative bounds required for an $F_T$-inert family. However it need not be compactly supported. This can be achieved by an application of smooth partition of unity to $\xi$.
\end{example}

\subsection{Voronoi Summation Formula}
\begin{lemma}[GL(2) Voronoi Summation Formula]\label{voronoi}
Let $f$ be a holomorphic cusp form of level $1$ with Fourier
coefficients $\lambda_f(n)$. Let $a$ and $q$
be integers with $(a,q)=1$. Let $G$ be a compactly supported smooth
function on $\mathbb{R}$. Then
\begin{equation}
\sum_{m=1}^{\infty}\lambda_f(m)e\!\left(-\frac{am}{q}\right)G(m)=\frac{2 \pi i^k}{q}\sum_{m=1}^{\infty}\lambda_f(m)e\!\left(\frac{\overline{a}m}{q}\right)
I\!\left(m\right),\end{equation} where
\[I(y)=\int_0^\infty
G(x)J_{k-1}\left(\frac{4\pi\sqrt{xy}}{q}\right)
\,dx,
\]
where $J_{k-1}$ denotes the $J$-Bessel function.
\end{lemma}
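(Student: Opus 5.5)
The statement to prove is the Voronoi summation formula of Lemma~\ref{voronoi}. I would derive it from the modularity of $f$ under a matrix in $SL(2,\mathbb{Z})$ that maps the cusp $-a/q$ to $\infty$. The idea is to express the twisted sum through the Mellin transform of $G$ and a twisted $L$-function, and then apply a functional equation for that twisted $L$-function.

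\textbf{Step 1: functional equation of the twisted $L$-function.} For $(a,q)=1$ put
\[
L(s,f,-a/q)=\sum_{m\ge1}\lambda_f(m)e(-am/q)m^{-s}.
\]
Choose $\gamma=\begin{pmatrix} \ast & \ast\\ q & a\end{pmatrix}\in SL(2,\mathbb{Z})$. Its lower-left entry gives $\gamma(-a/q+iy/q)=\bar a/q+i/(qy)$ up to the correct sign convention. The weight-$k$ relation $f(\gamma z)=(qz+a)^k f(z)$ then gives
\[
f\!\left(-\tfrac aq+\tfrac{iy}{q}\right)=(iy)^{-k}f\!\left(\tfrac{\bar a}{q}+\tfrac{i}{qy}\right).
\]
I then take the Mellin transform in $y$ of both sides, using the Fourier expansion $f(z)=\sum\lambda_f(m)m^{(k-1)/2}e(mz)$. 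Since $f$ is a cusp form, both sides decay rapidly, so the transforms exist and $L(s,f,-a/q)$ continues to an entire function. This yields
\[
\Big(\tfrac{q}{2\pi}\Big)^{s}\Gamma\!\big(s+\tfrac{k-1}{2}\big)L(s,f,-a/q)=i^k\Big(\tfrac{q}{2\pi}\Big)^{1-s}\Gamma\!\big(1-s+\tfrac{k-1}{2}\big)L(1-s,f,\bar a/q).
\]

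\textbf{Step 2: Mellin inversion.} Write $G(x)=\frac1{2\pi i}\int_{(2)}\tilde G(s)x^{-s}\,ds$. Because $G$ is compactly supported in $(0,\infty)$ and smooth, $\tilde G$ is entire and decays rapidly in vertical strips. This gives
\[
\sum_m\lambda_f(m)e(-am/q)G(m)=\frac1{2\pi i}\int_{(2)}\tilde G(s)L(s,f,-a/q)\,ds.
\]
I shift the contour to $\Re s=-1$. No poles are crossed, since the twisted $L$-function is entire. Next I substitute the functional equation from Step~1 and expand $L(1-s,f,\bar a/q)$ as an absolutely convergent Dirichlet series. Interchanging sum and integral, each term $m$ equals $\frac{2\pi i^k}{q}\lambda_f(m)e(\bar am/q)$ times
\[
\frac1{2\pi i}\int_{(-1)}\tilde G(s)\Big(\tfrac{4\pi^2 m}{q^2}\Big)^{s-1}\frac{\Gamma(1-s+\frac{k-1}2)}{\Gamma(s+\frac{k-1}2)}\,ds .
\]

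\textbf{Step 3: identify the kernel as $I(m)$.} I use the Mellin--Barnes formula for the Bessel function, $\int_0^\infty J_{k-1}(4\pi\sqrt{u})u^{s-1}du=(2\pi)^{-2s}\Gamma(s+\frac{k-1}2)/\Gamma(1-s+\frac{k-1}2)$, valid in a suitable strip. Combined with Parseval for Mellin transforms and the change of variables $u=xm/q^2$, it shows that the integral in Step~2 equals $\int_0^\infty G(x)J_{k-1}(4\pi\sqrt{xm}/q)\,dx=I(m)$. This proves the lemma.

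\textbf{Main obstacle.} The step needing the most care is getting the constants and the sign of the additive twist exactly right. One has to check that $e(-am/q)$ on the left really becomes $e(\bar am/q)$ on the right and that the factor $2\pi i^k/q$ comes out as stated. I would settle this by checking the matrix computation in Step~1 directly. The analytic interchanges are routine, since the dual series converges absolutely on $\Re(1-s)=2$ and $\tilde G$ decays rapidly. Alternatively, one may simply cite the standard reference, Iwaniec--Kowalski Theorem~4.?/Kowalski--Michel--VanderKam, where the formula appears in exactly this form.
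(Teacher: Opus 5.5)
The paper gives no proof of this lemma. It is quoted as the standard $GL(2)$ Voronoi formula and is only applied to weights supported in dyadic ranges away from $0$. Your derivation is the classical proof: Hecke's functional equation for the additively twisted $L$-function, Mellin inversion, and the Mellin--Barnes formula for $J_{k-1}$. It is correct.

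The points you flagged as the main obstacle all check out:
\begin{itemize}
\item With $\gamma=\begin{pmatrix}\alpha&\beta\\ q&a\end{pmatrix}$ one gets exactly $\gamma(-a/q+iy/q)=\alpha/q+i/(qy)$ with $\alpha\equiv\bar a\pmod q$, so there is no sign ambiguity.
\item The Mellin transform of $(iy)^{-k}f(\bar a/q+i/(qy))$ produces $i^{-k}$, which equals $i^{k}$ since $k$ is even.
\item The identity $(q/2\pi)^{1-2s}m^{s-1}=\frac{2\pi}{q}(4\pi^2m/q^2)^{s-1}$ gives the factor $2\pi i^k/q$.
\item Your Bessel Mellin transform $(2\pi)^{-2s}\Gamma(s+\frac{k-1}{2})/\Gamma(1-s+\frac{k-1}{2})$ is right, with absolute convergence for $-\frac{k-1}{2}<\Re s<\frac14$.
\end{itemize}

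Three details hidden under ``routine'' should be written out.

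\textbf{Polynomial growth for the first contour shift.} Moving from $\Re s=2$ to $\Re s=-1$ needs polynomial growth of $L(s,f,-a/q)$ in that strip. Boundedness of the completed function only gives growth like $e^{\pi|t|/2}$. The merely super-polynomial decay of $\tilde G$ cannot absorb that, so you should invoke Phragm\'en--Lindel\"of. The inputs are boundedness on $\Re s=2$ and a polynomial bound on $\Re s=-1$ from the functional equation and Stirling.

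\textbf{The Parseval step is not valid on $\Re s=-1$.} Inserting $G(x)=\frac{1}{2\pi i}\int_{(\sigma)}\tilde G(s)x^{-s}\,ds$ into $I(m)$ and applying Fubini requires $\frac34<\sigma<\frac{k+1}{2}$ for absolute convergence of $\int_0^\infty x^{-\sigma}|J_{k-1}(4\pi\sqrt{xm}/q)|\,dx$. So each dual integral must first be moved from $\Re s=-1$ to, say, $\Re s=1$. This costs nothing, because $\tilde G$ is entire and $\Gamma(\frac{k+1}{2}-s)$ has its first pole at $s=\frac{k+1}{2}$.

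\textbf{Support of $G$.} You assume $G$ is supported in $(0,\infty)$. This is narrower than the lemma's wording (``compactly supported on $\mathbb{R}$''), but it is the standard hypothesis and all the paper uses. If $G(0)\neq 0$, then $\tilde G$ has a pole at $s=0$ and both contour shifts would need to be revisited.
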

\subsection{Ramanujan's Bound on Average}
\begin{lemma}\label{ramanujan}
Let $\lambda_f(n)$ be the Fourier coefficents of the cusp form $f$. Then for any $\epsilon>0$,
\begin{equation*}
\sum_{n \ll X}\left|\lambda_f(n)\right|^2
\ll_{f,\epsilon}
X^{1+\epsilon}.
\end{equation*}
\end{lemma}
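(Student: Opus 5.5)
The statement to prove is Lemma~\ref{ramanujan}, the Rankin--Selberg bound on average for $|\lambda_f(n)|^2$. The natural route is through the Rankin--Selberg $L$-function
\[
L(s,f\times\bar f)=\sum_{n\ge1}\frac{|\lambda_f(n)|^2}{n^s}.
\]
By Rankin--Selberg theory this series has meromorphic continuation to $\Re(s)>1/2$, with only a simple pole at $s=1$. Its residue is a constant multiple of the Petersson norm $\langle f,f\rangle$.

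\textbf{Case of an asymptotic.} A standard Tauberian argument (Wiener--Ikehara, or Landau's theorem since the coefficients are nonnegative) then gives
\[
\sum_{n\le X}|\lambda_f(n)|^2\sim c_fX.
\]
This yields the lemma even without the factor $X^\epsilon$.

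\textbf{Elementary alternative via Deligne.} Since the paper already invokes Deligne's bound, there is a simpler argument. Deligne gives $|\lambda_f(n)|\le d(n)$, where $d$ is the divisor function. Combined with $d(n)\ll_\epsilon n^{\epsilon}$, this gives
\[
\sum_{n\le X}|\lambda_f(n)|^2\le\sum_{n\le X}d(n)^2\ll X^{1+\epsilon}.
\]
Alternatively, one can use $\sum_{n\le X}d(n)^2\ll X(\log X)^3$ directly.

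\textbf{Plan and expected difficulty.} I would present the elementary Deligne argument as the proof and cite Rankin--Selberg only as a remark showing that the $\epsilon$-loss can be removed. There is no real obstacle here. The only subtle point is justifying the input bound: either Deligne's theorem for holomorphic forms of level $1$, or the analytic properties of $L(s,f\times\bar f)$. Both are standard and can be quoted, for instance from \cite{IK}.
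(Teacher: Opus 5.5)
Your argument is correct. The paper does not prove this lemma. It quotes it as a standard fact under the heading ``Ramanujan's bound on average'', a name that refers to the Rankin--Selberg estimate. So your main argument, Deligne's $|\lambda_f(n)|\le d(n)$ plus $\sum_{n\le X}d(n)^2\ll X(\log X)^3$, is a genuinely different route from the one the paper has in mind.

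Your route is legitimate here, since $f$ is a holomorphic Hecke eigenform of level $1$ and the paper already invokes Deligne in Section~3.1. However, it uses the full pointwise Ramanujan--Petersson bound to prove a statement about mean squares. It would also not be available for Maass forms.

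The Rankin--Selberg route needs no Deligne:
\begin{itemize}
\item Hecke's bound $\lambda_f(n)\ll n^{1/2}$ gives convergence of $\sum_n|\lambda_f(n)|^2n^{-s}$ for $\Re(s)>2$.
\item Landau's theorem on Dirichlet series with nonnegative coefficients then moves the abscissa of convergence down to the pole at $s=1$.
\item Rankin's trick, $\sum_{n\le X}a_n\le X^{1+\epsilon}\sum_n a_n n^{-1-\epsilon}$, then gives exactly the stated $X^{1+\epsilon}$.
\end{itemize}
Contour shifting or Wiener--Ikehara upgrades this to $\ll_f X$, indeed $c_fX+O(X^{3/5})$; this is what ``Ramanujan on average'' refers to. For the paper the $\epsilon$-loss in either route is irrelevant, since every estimate carries a factor $T^\epsilon$ anyway.

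One notational remark. The standard Rankin--Selberg $L$-function is $\zeta(2s)\sum_n|\lambda_f(n)|^2n^{-s}$, so what you call $L(s,f\times\bar f)$ is that function divided by $\zeta(2s)$. Your restriction of its continuation to $\Re(s)>1/2$, where $\zeta(2s)\neq 0$, is exactly what makes your statement correct for this series.
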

\section{Tools Used}\label{four}
\subsection{Delta Method}

The following variant of the $\delta$-method was first introduced byAggarwal~ \cite{Agg}. Let $\delta:\mathbb{Z}\rightarrow\{0,1\}$ be defined by
\[
\delta(n)=
\begin{cases}
1,& n=0,\\
0,& \text{otherwise}.
\end{cases}
\]
Assume $Q \gg NT^\epsilon/K$ and let $q \sim Q$. Then $0<|n|\ll NT^\epsilon/K$, there is no integer $q \sim Q$ dividing $n$. For $|n|\gg NT^\epsilon/K$, we insert a smooth integral transform. The rapid decay of the Fourier transform of the smooth weight then shows that the contribution from this range is $O_A(T^{-A})$ for every $A>0$. This gives the following lemma.

\begin{lemma}\label{delta}
Let $B\in C_c^\infty(\mathbb R)$ satisfy $\int_{\mathbb R}B(v)\,dv=1$, and let $\mathfrak Q$ denote the set of primes in $[Q,2Q]$, where $Q>NT^\epsilon/K$. Then
\[
\delta(n)=\frac{1}{|\mathfrak{Q}|}\sum_{q\in\mathfrak Q}\frac1q\sum_{a\bmod q}e\!\left(\frac{an}{q}\right)\frac1K\int_{\mathbb R}B\!\left(\frac vK\right)e\!\left(\frac{nv}{N}\right)\,dv+O_A(T^{-A}).
\]
\end{lemma}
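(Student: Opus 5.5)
The plan is to evaluate the right-hand side exactly, term by term, and check that it equals $\delta(n)$ up to a negligible error. The lemma is meant for integers $n$, and no upper bound on $|n|$ should be needed. I will do the $a$-sum and the $v$-integral first, then split into three ranges of $n$.

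\emph{Step 1: the two inner operations.} By orthogonality of additive characters,
\[
\frac1q\sum_{a\bmod q}e\!\left(\frac{an}{q}\right)=\mathbf{1}_{q\mid n}.
\]
Substituting $v=Ku$ gives
\[
\frac1K\int_{\mathbb R}B\!\left(\frac vK\right)e\!\left(\frac{nv}{N}\right)dv=\int_{\mathbb R}B(u)\,e\!\left(\frac{nKu}{N}\right)du=\widehat B\!\left(-\frac{nK}{N}\right).
\]
Since $B\in C_c^\infty(\mathbb R)$, repeated integration by parts gives $\widehat B(\xi)\ll_A(1+|\xi|)^{-A}$ for every $A$, and $\widehat B(0)=\int B=1$. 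Hence the right-hand side, without the error term, equals
\[
\widehat B\!\left(-\frac{nK}{N}\right)\cdot\frac{1}{|\mathfrak Q|}\#\{q\in\mathfrak Q:\ q\mid n\}.
\]

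\emph{Step 2: the case $n=0$.} Every $q\in\mathfrak Q$ divides $0$, so the prime-count factor is $1$. Also $\widehat B(0)=1$, so the expression equals $1=\delta(0)$ exactly.

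\emph{Step 3: the case $0<|n|\le NT^\epsilon/K$.} Here $|n|<Q$ because $Q>NT^\epsilon/K$. A prime $q\ge Q$ cannot divide a nonzero integer of absolute value less than $Q$. So the count is $0$ and the expression vanishes, matching $\delta(n)=0$.

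\emph{Step 4: the case $|n|>NT^\epsilon/K$.} This is the only place where the error term appears, and it is the step I expect to need the most care, though it is not a serious obstacle. The key is to avoid any bound on the number of prime divisors of $n$ in terms of $|n|$, which would require $|n|$ to be polynomial in $T$. Instead I use only the trivial bound
\[
\frac1{|\mathfrak Q|}\#\{q\in\mathfrak Q: q\mid n\}\le 1.
\]
In this range $|nK/N|>T^\epsilon$, so the decay of $\widehat B$ gives
\[
\widehat B\!\left(-\frac{nK}{N}\right)\ll_{A'}T^{-\epsilon A'}.
\]
Choosing $A'=A/\epsilon$ makes this $O_A(T^{-A})$, uniformly in $n$. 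Since $\delta(n)=0$ here, the identity holds with error $O_A(T^{-A})$.

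Combining Steps 2–4 proves the lemma.
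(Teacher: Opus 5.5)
Your proof is correct and follows the same route as the paper's sketch. Orthogonality reduces the $a$-sum to $\mathbf{1}_{q\mid n}$, no prime $q\ge Q$ divides a nonzero $n$ with $|n|\le NT^\epsilon/K<Q$, and for $|n|>NT^\epsilon/K$ the rapid decay of $\widehat B$ at $nK/N$ gives $O_A(T^{-A})$; your exact factorization $\widehat B(-nK/N)\cdot|\mathfrak Q|^{-1}\#\{q\in\mathfrak Q: q\mid n\}$, together with the trivial bound $\le 1$ on the prime-count factor, makes explicit the uniformity in $n$ that the paper leaves implicit.
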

\subsection{Smooth Partition of Unity}
We often use dyadic partition of unity, 
to localise an infinite sum smoothly inside an interval $n \in [N,2N]$, We use the following construction of a smooth dyadic partition of unity, following Kumar, Mallesham, Sharma and Singh~\cite{SMPS}. For completeness, we include the details of the construction.
\begin{lemma}
    There exists a smooth function $W: \mathbb{R} \to \mathbb{R}_{\geq 0}$ such that
    \begin{enumerate}
        \item Support of $W$ lies in $[\frac{3}{4},2].$ 
        \item $W(x)=1$ for all $x \in [1,3/2].$
        \item $W(x)+W(x/2)=1$ for all $x \in [1,3].$
    \end{enumerate}
\end{lemma}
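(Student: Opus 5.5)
The plan is to build $W$ explicitly by gluing a single smooth monotone transition function to its reflected, dilated copy.

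\textbf{Step 1: the transition function.} Fix $\psi\in C^\infty(\mathbb R)$ with $0\le\psi\le1$, $\psi(x)=0$ for $x\le 3/4$ and $\psi(x)=1$ for $x\ge1$. This is the standard construction. Put $\rho(t)=e^{-1/t}$ for $t>0$ and $\rho(t)=0$ for $t\le 0$, and take
\[
\psi(x)=\frac{\rho(x-3/4)}{\rho(x-3/4)+\rho(1-x)}.
\]
The denominator never vanishes, so $\psi$ is smooth.

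\textbf{Step 2: definition of $W$.} Set $W(x)=\psi(x)$ for $x\le 3/2$ and $W(x)=1-\psi(x/2)$ for $x\ge 1$. The two formulas agree on the overlap $[1,3/2]$: there $\psi(x)=1$ since $x\ge1$, and $\psi(x/2)=0$ since $x/2\le 3/4$, so both equal $1$. Hence $W$ is well defined. It is smooth because each formula is smooth on an open set and these open sets cover $\mathbb R$: the first formula on $(-\infty,3/2)$ and the second on $(1,\infty)$. Since $0\le\psi\le1$, we also get $W\ge0$.

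\textbf{Step 3: verification of the three properties.}
\begin{enumerate}
\item Support. For $x\le 3/4$ we have $W(x)=\psi(x)=0$. For $x\ge 2$ we have $x/2\ge1$, so $W(x)=1-\psi(x/2)=0$. Hence $\operatorname{supp}W\subset[3/4,2]$.
\item Plateau. $W=1$ on $[1,3/2]$ was checked in Step 2.
\item Partition identity. For $x\in[1,3/2]$ we have $W(x)=1$. Also $x/2\in[1/2,3/4]$, so $W(x/2)=\psi(x/2)=0$, and the sum is $1$. For $x\in[3/2,3]$ we use the second formula, $W(x)=1-\psi(x/2)$. Since $x/2\in[3/4,3/2]$, the first formula gives $W(x/2)=\psi(x/2)$, and again the sum is $1$.
\end{enumerate}

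\textbf{Main obstacle.} The only delicate point is smoothness at the junction, which is handled by the overlapping-domain observation in Step 2. There is no analytic difficulty beyond this. One could add that the dilates $W(x/2^j)$ then give a smooth dyadic partition of unity on $[1,\infty)$ by telescoping, but this is not required for the lemma.
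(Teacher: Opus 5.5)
Your construction is correct. The function $\psi$ is smooth with $0\le\psi\le 1$. The two formulas defining $W$ agree on $[1,3/2]$, and each is smooth on one member of the open cover $(-\infty,3/2)\cup(1,\infty)$. Your checks of the support, the plateau, and the identity $W(x)+W(x/2)=1$ (done separately on $[1,3/2]$ and on $[3/2,3]$) are all valid.

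The paper gives no argument of its own for this lemma. It simply refers to Li for the existence of $W$, and then uses properties (1)--(3) in the telescoping argument that follows. So the difference is that you supply a self-contained explicit construction, where the paper relies on a citation.

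Your route also shows that the construction is essentially canonical. Condition (3) forces $W(x)=1-W(x/2)$ on $[3/2,2]$, and conditions (1) and (2) fix $W$ everywhere outside $[3/4,1]\cup[3/2,2]$. Hence every admissible $W$ comes from your recipe, taking $\psi=W$ on $(-\infty,1]$ and $\psi=1$ on $[1,\infty)$. The bound $\psi\le 1$ needed there follows from $W(2x)=1-W(x)\ge 0$ for $x\in[3/4,1]$.

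The citation buys only brevity.
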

\begin{proof}
See Li ~\cite{Xi}
\end{proof}
For any $H \in \mathbb{N}$, define
$$F(x)=W(x)+W\left(\frac{x}{2}\right)+W\left(\frac{x}{4}\right) \dots +W\left(\frac{x}{2^H}\right).$$
If $W(x)W(\frac{x}{4}) \neq 0$ implies $x \in [3/4,2]$ and $x \in [3,8]$ which is a contradiction. Thus $W(x)W(\frac{x}{4})=0.$ for all $x \in \mathbb{R}.$ If $x \in [1,3 \cdot 2^{H-1}]$, then there exists $0 \leq k <H$ such that $2^k \leq x \leq 3 \cdot 2^k$. Consequently
\begin{equation*}
    1=W\left(\frac{x}{2^k}\right)+W\left(\frac{x}{2^{k+1}}\right).
\end{equation*}
This implies $F(x)=1$ on $[1,3 \cdot 2^{H-1}].$ Morever it follows directly from the definition that $F(x)$ is supported on $[3/4, 2^{H+1}]$.  We conclude the existence of a smooth partition of unity since \begin{equation*}
    \sum_{k \in \mathbb{Z}} W\left(\frac{x}{2^k}\right)=1 \qquad \text{ for all }  x \in [1,\infty).
\end{equation*}
\subsection{Duality Principle}
Duality helps us to separate the Fourier coefficents which can be bounded by Ramanujan Sums on Average.(See Lemma $\ref{ramanujan}$)
\begin{lemma}[Duality principle]\label{dualitylemma}
		Let $\phi: \mathbb{Z}^2\rightarrow \mathbb{C}$. For any complex numbers $a_m$, \begin{align*}
			\sum_n\left|\sum_m a_m\phi(m,n)\right|^2\ll \left( \sum_{ m} |a_m|^2\right)\sup_{\|\beta\|_2=1}\sum_m\left|\sum_n\beta(n)\phi(m,n)\right|^2,
		\end{align*}
		where the supremum is taken over all sequences of complex numbers $\beta(n)$ such that $$\|\beta\|_2=\sqrt{\sum_n|\beta(n)|^2}=1.$$
	\end{lemma}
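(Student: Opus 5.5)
The plan is to use the fact that the $\ell^2$ norm is the supremum of pairings against unit vectors. This moves the summation variable $n$ inside, and a single application of Cauchy's inequality in the variable $m$ then finishes the proof. Throughout I will assume that the relevant sums are finite, or absolutely convergent. In every application in the paper the variables run over dyadic ranges, so this is harmless. In general one can reduce to finitely many $m,n$ by truncation and take a limit at the end.

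\textbf{Step 1: duality.} Set $c(n)=\sum_m a_m\phi(m,n)$. For any sequence $\beta$ with $\|\beta\|_2=1$, Cauchy's inequality gives $\left|\sum_n \beta(n)c(n)\right|\le \|c\|_2$. If $c\neq 0$, the choice $\beta(n)=\overline{c(n)}/\|c\|_2$ gives equality. Hence
\begin{equation*}
\sum_n\left|\sum_m a_m\phi(m,n)\right|^2=\|c\|_2^2=\sup_{\|\beta\|_2=1}\left|\sum_n\beta(n)\sum_m a_m\phi(m,n)\right|^2 .
\end{equation*}

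\textbf{Step 2: swap and apply Cauchy.} For a fixed unit vector $\beta$, interchange the finite sums to obtain $\sum_m a_m\left(\sum_n\beta(n)\phi(m,n)\right)$. Cauchy's inequality in $m$ then bounds its square by
\begin{equation*}
\Big(\sum_m|a_m|^2\Big)\sum_m\left|\sum_n\beta(n)\phi(m,n)\right|^2 .
\end{equation*}
Taking the supremum over $\beta$ yields the lemma, in fact with implied constant $1$.

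\textbf{Main obstacle.} There is essentially none. The only point needing care is the justification of the interchange of summation and of the attainment of the supremum when the index sets are infinite. I would handle this by first proving the inequality for finitely supported truncations of $a_m$ and of the $n$-range, and then letting the truncation grow. The left side increases monotonically to its limit, while the right side for the truncated problem is bounded by the full supremum, since a unit vector supported on the truncated $n$-range is still admissible for the full problem.
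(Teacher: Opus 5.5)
Your argument is correct, and it is the standard proof of the duality principle; the paper states this lemma without proof, as a standard fact. Writing $\|c\|_2$ as a supremum of pairings against unit vectors, swapping the sums and applying Cauchy's inequality in $m$ gives the bound with implied constant $1$.

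One small correction to your truncation remark: the truncated left side is not monotone in the $m$-cutoff. For a fixed finite $n$-range, first let the $m$-truncation tend to the full inner sum, and only then exhaust the $n$-range monotonically. In all of the paper's applications the sums are finite anyway.
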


\subsection{Integration by Parts}
We frequently encounter oscillatory integrals of the form $\int_{-\infty}^{\infty} w(t)e^{ih(t)}\,dt.
$ We begin by recalling a standard integration by parts lemma, which provides sufficient conditions under which such integrals are negligibly small. For the proof, refer Lemma $8.1$ in Blomer, Khan and Young~\cite{BKY}.
\begin{lemma}\label{ibparts}
Let $Y \geq 1$  and $X, Q, U, R>0$, and let 
\begin{equation*}
    I=\int_{-\infty}^{\infty} w(t)e^{ih(t)}\,dt,
\end{equation*}
where $w$ is a smooth function supported on an interval $[\alpha,\beta]$ satisfying $w^{(j)}(t)\ll_j X U^{-j},$
and $h$ is a smooth real-valued function on $[\alpha,\beta]$ satisfying $|h'(t)|\ge R,$
and $
h^{(j)}(t)\ll_j YQ^{-j}$ for all $j\ge2.$ Then, for every integer $ A\ge0$,
\begin{equation*}
I\ll_A (\beta-\alpha) X R^{-A}\left(U^{-A}+\left(\frac{Q}{\sqrt{Y}}\right)^{-A} \right).    
\end{equation*}
\end{lemma}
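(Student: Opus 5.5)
The statement to prove is the last one above, Lemma~\ref{ibparts}, the integration by parts bound of Blomer--Khan--Young. I plan a direct proof by repeated integration by parts with the differential operator
\[
D w = -\frac{d}{dt}\left(\frac{w}{i h'}\right),
\]
which satisfies $\int w e^{ih}\,dt = \int (Dw) e^{ih}\,dt$. This holds because $w$ is smooth and compactly supported in $[\alpha,\beta]$, so boundary terms vanish, and $h'\neq 0$ on the support. Iterating $A$ times gives $I=\int (D^A w)e^{ih}\,dt$. It follows that
\[
|I|\le (\beta-\alpha)\sup_t |D^A w(t)|,
\]
and the whole task reduces to bounding $D^A w$ pointwise.

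The key step is a structural description of $D^A w$ by induction on $A$. It is a linear combination of terms of the form
\[
\frac{w^{(\nu_0)}\, h^{(\nu_1)}\cdots h^{(\nu_m)}}{(h')^{A+m}}, \qquad \nu_i\ge 2 \text{ for } i\ge1, \qquad \nu_0+\sum_{i=1}^m(\nu_i-1)=A.
\]
Each application of $D$ does one of three things: it differentiates $w^{(\nu_0)}$, raising $\nu_0$ by one; it differentiates some $h^{(\nu_i)}$, raising $\nu_i$ by one; or it differentiates a power of $1/h'$, producing a new factor $h''$ and one more power of $h'$ in the denominator. In every case the weight $\nu_0+\sum(\nu_i-1)$ rises by exactly one, and the exponent of $h'$ in the denominator stays equal to $A+m$.

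Next I insert the hypotheses $w^{(j)}\ll XU^{-j}$, $h^{(j)}\ll YQ^{-j}$ for $j\ge 2$, and $|h'|\ge R$. A typical term is then bounded by
\[
X U^{-\nu_0}\, Y^m Q^{-\sum\nu_i} R^{-A-m}.
\]
Writing $\sum_i \nu_i = (A-\nu_0)+m$, this becomes
\[
X R^{-A}\, U^{-\nu_0}\,\Big(\frac{Y}{QR}\Big)^{m} Q^{-(A-\nu_0)}.
\]

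This is the step I expect to be the main obstacle: organising the bookkeeping so that only the two quantities $U^{-1}$ and $\sqrt{Y}/Q$ survive. The point is that each $h$-factor has $\nu_i\ge2$, so $m\le A-\nu_0$. Set $r=A-\nu_0$. The $h$-part contributes $Y^m Q^{-(r+m)}R^{-m}$, which I handle in two cases.

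\begin{itemize}
\item If $Y/(QR)\le \sqrt{Y}/Q$, then because $Y\ge 1$ the $h$-part is at most $(\sqrt{Y}/Q)^{r}$ up to the factor $R^{-m}$, which we already absorb.
\item Otherwise I use the standard inequality
\[
Y^m Q^{-r-m}R^{-m}\le \big(\sqrt Y/Q\big)^{r} \max\big(1,(\sqrt{Y}/(QR))\big)^{\,\cdot}\ldots
\]
More cleanly: $Y^mQ^{-(r+m)}R^{-(r+m)}\cdot R^{r}$ is what appears when the full $R^{-A-m}$ is kept. Then $Y^m\le Y^{(r+m)/2}$, since $m\le (r+m)/2$ is equivalent to $m\le r$, and $Y\ge1$.
\end{itemize}

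Grouping the factors this way, each term is bounded by
\[
X\,\big(UR\big)^{-\nu_0}\,\Big(\frac{QR}{\sqrt Y}\Big)^{-(r+m)} R^{\,m}\cdots.
\]
I will redo this exponent count carefully, keeping $R^{-A-m}$ split as $R^{-\nu_0}\cdot R^{-(r+m)}$. That split gives exactly
\[
X\,(UR)^{-\nu_0}\,\big(QR/\sqrt{Y}\big)^{-(r+m)}, \qquad r+m\ge r .
\]

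The final bound then follows from $(UR)^{-\nu_0}(QR/\sqrt Y)^{-r}\le \max(UR,\,QR/\sqrt Y)^{-A}$ in the regime where the latter is at most $1$; otherwise the term only gets smaller as $m$ grows. Summing the $O_A(1)$ terms gives
\[
I\ll_A (\beta-\alpha)\,X\left[(UR)^{-A}+\big(QR/\sqrt Y\big)^{-A}\right],
\]
which is the claimed bound.
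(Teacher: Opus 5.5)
Your approach is the standard Blomer--Khan--Young argument; the paper itself only cites their Lemma 8.1. Everything up to the last paragraph is right:
\begin{itemize}
\item the identity $\int we^{ih}=\int (Dw)e^{ih}$;
\item the inductive description of $D^Aw$, with $\nu_0+\sum_{i\ge1}(\nu_i-1)=A$, $\nu_i\ge2$ and denominator $(h')^{A+m}$;
\item the constraint $m\le r=A-\nu_0$;
\item the per-term bound $X(UR)^{-\nu_0}(QR/\sqrt Y)^{-(r+m)}$.
\end{itemize}
The bulleted ``two cases'' passage before this bound is incoherent and should simply be deleted.

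The gap is in your final step, and there are two problems. First, with $\nu_0+r=A$, the quantity $(UR)^{-\nu_0}(QR/\sqrt Y)^{-r}$ is bounded by $\min(UR,QR/\sqrt Y)^{-A}\le (UR)^{-A}+(QR/\sqrt Y)^{-A}$, not by $\max(UR,QR/\sqrt Y)^{-A}$. Your inequality fails whenever $UR<QR/\sqrt Y$ and $\nu_0>0$. Second, and more seriously, your claim that ``otherwise the term only gets smaller as $m$ grows'' is backwards. When $QR/\sqrt Y<1$, the factor $(QR/\sqrt Y)^{-(r+m)}$ increases with $m$. The term $w\,(h'')^A/(h')^{2A}$ does occur in $D^Aw$ with nonzero coefficient; it has $\nu_0=0$ and $m=r=A$. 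Your method bounds it by $X(QR/\sqrt Y)^{-2A}$, which exceeds the claimed $X(QR/\sqrt Y)^{-A}$. Already for $A=1$, one integration by parts only gives $(\beta-\alpha)X\bigl((UR)^{-1}+(QR/\sqrt Y)^{-2}\bigr)$.

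So integration by parts alone does not prove the lemma in that regime; you need a case split.
\begin{itemize}
\item If $QR/\sqrt Y\le 1$, use the trivial bound $|I|\le(\beta-\alpha)\sup|w|\ll(\beta-\alpha)X$. This is at most $(\beta-\alpha)X(QR/\sqrt Y)^{-A}$, since $(QR/\sqrt Y)^{-A}\ge 1$.
\item If $QR/\sqrt Y\ge1$, then $(QR/\sqrt Y)^{-(r+m)}\le(QR/\sqrt Y)^{-r}$. Each term is then $\ll X(UR)^{-\nu_0}(QR/\sqrt Y)^{-(A-\nu_0)}\le X\bigl[(UR)^{-A}+(QR/\sqrt Y)^{-A}\bigr]$. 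No assumption on $UR$ is needed here. Summing the $O_A(1)$ terms and multiplying by $\beta-\alpha$ gives the lemma.
\end{itemize}
With this repair and the $\min$ correction, your proof is complete.
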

\subsection{Poisson Summation Formula}\label{psf}

Let $f:\mathbb{R}\rightarrow\mathbb{C}$ be a Schwartz class function, Define the Fourier transform of $f$ by
\[ \widehat{f}(y)=\int_{\mathbb{R}}f(x)e(-xy)\,dx.
\]
\begin{lemma}
For any function $f$ in the Schwartz class
\[
\sum_{n\in\mathbb{Z}}f(n)=\sum_{n\in\mathbb{Z}}\widehat{f}(n).
\]
\end{lemma}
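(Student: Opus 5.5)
We prove the Poisson summation formula by periodization. Let $f$ be a Schwartz function and define the $1$-periodic function
\[
F(x)=\sum_{n\in\mathbb{Z}} f(x+n).
\]
The first step is to check that $F$ is well defined and smooth. Since $f$ and all its derivatives satisfy $|f^{(j)}(y)|\ll_j (1+|y|)^{-2}$, the series for $F$ and each termwise differentiated series converge absolutely and uniformly on compact sets, in particular on $[0,1]$. Hence $F\in C^\infty(\mathbb{R}/\mathbb{Z})$, and $F^{(j)}(x)=\sum_n f^{(j)}(x+n)$.

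Next we compute the Fourier coefficients of $F$. Uniform convergence on $[0,1]$ allows us to interchange sum and integral. Unfolding the integral then gives
\[
c_k=\int_0^1 F(x)e(-kx)\,dx=\sum_{n\in\mathbb{Z}}\int_n^{n+1} f(y)e(-ky)\,dy=\int_{\mathbb{R}} f(y)e(-ky)\,dy=\widehat{f}(k),
\]
where we used $e(-k(y-n))=e(-ky)$ for $k,n\in\mathbb{Z}$.

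The main point is to justify pointwise convergence of the Fourier series of $F$ to $F$, and in particular at $x=0$. Since $F$ is smooth, we integrate by parts twice on the circle; there are no boundary terms by periodicity. This gives $c_k=(2\pi i k)^{-2}\int_0^1 F''(x)e(-kx)\,dx\ll k^{-2}$ for $k\neq 0$, so $\sum_k |c_k|<\infty$. Alternatively, one may note directly that $\widehat{f}$ is itself Schwartz. The series $\sum_k c_k e(kx)$ therefore converges absolutely and uniformly to a continuous function $G$ with the same Fourier coefficients as $F$. By uniqueness of Fourier coefficients for continuous functions on the circle (for instance via Fej\'er's theorem, or via $L^2$-completeness of $\{e(kx)\}$ together with continuity), $G=F$ everywhere.

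Finally, evaluating at $x=0$ yields
\[
\sum_{n\in\mathbb{Z}} f(n)=F(0)=\sum_{k\in\mathbb{Z}}\widehat{f}(k),
\]
which is the claimed identity. The only genuinely delicate step is the pointwise convergence of the Fourier series, and it is handled by the decay estimate for $c_k$ obtained from the smoothness of $F$.
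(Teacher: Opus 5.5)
Your proof is correct. It is the standard periodization argument, and each step is adequately justified: the uniform convergence of $\sum_n f^{(j)}(x+n)$, the unfolding that gives $c_k=\widehat{f}(k)$, the $k^{-2}$ decay from two integrations by parts, and uniqueness of Fourier coefficients for continuous functions. The paper gives no proof of this lemma; it simply quotes the classical Poisson summation formula. So there is no competing route to compare with, and your argument is the textbook proof that the paper relies on implicitly.
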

As a consequence of Poisson Summation Formula, we have
\begin{lemma}\label{poissonmodq}
$f:\mathbb{R}\rightarrow\mathbb{C}$ be a Schwartz class function
\begin{align*}
    \sum_{h\in\mathbb Z}F(h)e\!\left(\frac{ha}{q} \right)=\sum_{\substack{h\in\mathbb Z \\ h \equiv -a \,\text{mod}\, q}}   \int_{\mathbb{R}} e\left( \frac{-hy}{q}\right) F(y) \,dy.
\end{align*}
\end{lemma}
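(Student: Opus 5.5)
The statement immediately above is Lemma~\ref{poissonmodq}. I will deduce it directly from the Poisson summation formula (the preceding lemma), applied to the function $F$, which I take to be Schwartz; this is the intended reading of the hypothesis, where the function is written $f$. The argument is a change of frequency variable, with no splitting into residue classes.

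First, set $G(x)=F(x)e(xa/q)$. This is again a Schwartz function, since multiplying by a unimodular character of linear phase preserves rapid decay of $G$ and of all its derivatives. Hence the Poisson summation formula gives
\[
\sum_{h\in\mathbb Z}F(h)e\!\left(\frac{ha}{q}\right)=\sum_{n\in\mathbb Z}\widehat{G}(n),
\qquad
\widehat G(n)=\int_{\mathbb R}F(y)\,e\!\left(-y\Big(n-\frac{a}{q}\Big)\right)dy .
\]

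Next, rewrite the frequency. Since $n-a/q=(qn-a)/q$, put $h=qn-a$. As $n$ runs over $\mathbb Z$, the integer $h$ runs bijectively over the residue class $h\equiv -a \pmod q$. Therefore
\[
\widehat G(n)=\int_{\mathbb R}F(y)\,e\!\left(-\frac{hy}{q}\right)dy ,
\]
and summing over $n$ is the same as summing over all $h\equiv -a\pmod q$. This is exactly the claimed identity.

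The only point needing care is the justification of Poisson for $G$, which is immediate from the Schwartz hypothesis. One should also keep track of the sign convention: the residue class is $h\equiv -a$ rather than $h\equiv a$ because $\widehat{\cdot}$ carries $e(-xy)$. There is no genuine obstacle in this argument.
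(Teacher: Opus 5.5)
Your proof is correct: applying Poisson summation to the Schwartz function $F(x)e(xa/q)$ and reindexing the frequencies by $h=qn-a$ gives exactly the stated identity, including the sign condition $h\equiv -a \pmod{q}$. This is essentially the paper's route, since the paper records the lemma only as a direct consequence of the Poisson summation formula and writes out no proof.
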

\subsection{Second Deriviative Bound}
For our purpose, The second deriviative bound suffices. One can refer Huxley\cite{HUX} for the proof. We state the lemma as follows.
\begin{lemma}{(Second Deriviative Test)}\label{secondd}
    Let $f(x)$ be a real valued and twice differentiable on the open interval $(a,b)$ with $f''(x) >0$ on $(\alpha, \beta)$. Let $g(x)$ be real, and let $V$ be the total variation of $g(x)$ on the closed interval $[\alpha, \beta]$ plus the maximum modulus of $g(x)$ on $[\alpha, \beta]$. Then
    \begin{align*}
        \Big| \int_{\alpha}^\beta g(x) e(f(x)) \,dx \Big| \leq \frac{V}{\sqrt{\pi \lambda}}.
    \end{align*}
\end{lemma}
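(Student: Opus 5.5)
We read the hypothesis as $f''(x)\ge \lambda>0$ on $[\alpha,\beta]$, which is where the parameter $\lambda$ in the bound comes from. The plan is to prove the case $g\equiv 1$ first, uniformly over all subintervals. We then pass to general $g$ by a Riemann--Stieltjes integration by parts, so that only $V$ enters.

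\textbf{Step 1: first derivative bound.} Suppose $f'$ is monotone on a subinterval $[c,d]$ with $|f'|\ge \mu>0$ there. We write
\[
\int_c^d e(f(x))\,dx=\int_c^d \frac{1}{2\pi i f'(x)}\,d\bigl(e(f(x))\bigr).
\]
Integrating by parts, the boundary terms contribute at most $\frac{1}{2\pi}\bigl(\frac{1}{|f'(c)|}+\frac{1}{|f'(d)|}\bigr)$. Since $1/f'$ is monotone of one sign, the remaining integral is at most $\frac{1}{2\pi}\bigl|\frac{1}{f'(c)}-\frac{1}{f'(d)}\bigr|$. Together these give a bound of order $1/(\pi\mu)$.

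\textbf{Step 2: the case $g\equiv1$.} Because $f''\ge\lambda$, $f'$ is strictly increasing. Fix $\delta>0$. The set $\{x:|f'(x)|<\delta\}$ is an interval, and by the mean value theorem its length is at most $2\delta/\lambda$; we bound the integral over it trivially. Its complement in any $[c,d]\subseteq[\alpha,\beta]$ consists of at most two intervals on which $|f'|\ge\delta$, and Step 1 applies to each. This gives
\[
\Bigl|\int_c^d e(f)\Bigr|\ll \frac{\delta}{\lambda}+\frac{1}{\delta}.
\]
Taking $\delta\asymp\sqrt{\lambda}$ yields a bound $\ll 1/\sqrt{\lambda}$, uniformly in $[c,d]$. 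Set $G(x)=\int_\alpha^x e(f(t))\,dt$; then $\sup_{x}|G(x)|$ is controlled by the same bound.

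\textbf{Step 3: general $g$.} Integrating by parts in the Stieltjes sense,
\[
\int_\alpha^\beta g\,dG=g(\beta)G(\beta)-\int_\alpha^\beta G\,dg .
\]
Hence the modulus is at most $\bigl(\max|g|+\mathrm{Var}_{[\alpha,\beta]}g\bigr)\sup_x|G(x)| = V\sup_x|G(x)|$, which is the claimed form.

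\textbf{Main obstacle.} The delicate point is the exact constant $1/\sqrt{\pi}$. The crude optimisation in Step 2 loses an absolute factor: for example, it gives $4/\sqrt{\pi\lambda}$ if Step 1 is stated with constant $1/(\pi\mu)$. To recover the sharp constant, I would follow Huxley and treat the two outer intervals via the monotone integration by parts without separately bounding the endpoint terms. I would then optimise $\delta$ with those exact constants. For the applications in this paper only the order $V\lambda^{-1/2}$ matters, so an absolute constant loss would be harmless if the sharp form resists.
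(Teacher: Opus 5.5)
\textbf{Verdict.} Steps 1--3 are sound, but the sharp constant $1/\sqrt{\pi}$ you set out to recover is unattainable: the inequality as printed is false.

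\textbf{What your argument proves.} You rightly read the hypothesis as $f''\ge\lambda>0$. The statement as printed leaves $\lambda$ undefined, so this reading is necessary. Under it, Steps 1--3 give
\[
\Bigl|\int_\alpha^\beta g(x)e(f(x))\,dx\Bigr|\le\frac{4V}{\sqrt{\pi\lambda}} .
\]
The ingredients are the trivial bound on the interval where $|f'|<\delta$, the first-derivative bound $1/(\pi\delta)$ on the two outer pieces, the choice $\delta=\sqrt{\lambda/\pi}$, and Stieltjes integration by parts. This is essentially Huxley's second derivative test (Lemma~5.1.3 in his book, if I recall the numbering correctly), which to my recollection carries the constant $4$. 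The paper gives no proof of its own and only cites Huxley.

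\textbf{The gap.} Your ``Main obstacle'' paragraph proposes to reach the constant $1/\sqrt{\pi}$ by sharper bookkeeping on the outer intervals. No refinement can do this. Take $f(x)=\lambda x^2/2$ on $[-L,L]$ and $g\equiv 1$, so that $f''=\lambda$ and $V=1$. Then
\[
\int_{-L}^{L} e\Bigl(\frac{\lambda x^2}{2}\Bigr)\,dx\longrightarrow\int_{\mathbb{R}} e^{\pi i\lambda x^2}\,dx=\frac{e^{i\pi/4}}{\sqrt{\lambda}}\qquad (L\to\infty),
\]
and the tails are $O(1/(\lambda L))$. For $L\gg\lambda^{-1/2}$ the modulus is therefore close to $1/\sqrt{\lambda}$. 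This exceeds the claimed bound $1/\sqrt{\pi\lambda}\approx 0.564/\sqrt{\lambda}$. The printed lemma has lost both the factor $4$ and the hypothesis $f''\ge\lambda$.

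\textbf{How to fix it.} Say explicitly that the statement must be corrected to $f''\ge\lambda>0$ on $(\alpha,\beta)$ with bound $4V/\sqrt{\pi\lambda}$; any absolute constant would do. Record the counterexample above. With that change, your Steps 1--3 are a complete proof, and the constant-chasing paragraph should be dropped. As you noted, nothing downstream is affected: the only use in the paper is the estimate $\ll|\mathcal{B}|^{-1/2}$ for the $t$-integral in the phase analysis, which is insensitive to absolute constants.
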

\section{Setup}\label{five}

$L(\frac{1}{2}+it,f)$ has analytic conductor $T^2$ hence from the approximate functional equation(See Lemma ~\ref{afe}), it suffices to bound
\begin{equation*}
    S_f(M,T) \ll \int_{\mathbb{R}} U\left(\frac{t-T}{M}\right) \Big| \sum_{n=1}^{\infty} \frac{\lambda_f(n)}{n^{\frac{1}{2}+it}} V_{\frac{1}{2}+it}\left(\frac{n}{T}\right)  \Big|^2 dt,
\end{equation*}
where $V_s(x)$ is a smooth function satisfying $x^j V^{(j)}_s(x) \ll_{A,j} (1+|x|)^{-A}$ for any $j \geq 0$ and $A >0$. Integration by parts to the integral representation of $V_s\left(\frac{n}{T}\right)$ gives arbitrary savings for $n \geq T^{1+\epsilon}$. It follows that
\begin{equation*}
     S_f(M,T)=\int_{\mathbb{R}} U\left(\frac{t-T}{M}\right) \Big| \sum_{n \leq T^{1+\epsilon}} \frac{\lambda_f(n)}{n^{\frac{1}{2}+it}} V_{\frac{1}{2}+it}\left(\frac{n}{T}\right)  \Big|^2 dt    + O(T^{-A}).
\end{equation*}
We use a smooth dyadic partition of unity, We arrive at
\begin{align*}
     S_f(M,T) &\ll \int_{\mathbb{R}} U\left(\frac{t-T}{M}\right) \Big| \sum_{N \ll T^{1+\epsilon}} \sum_{n=1}^\infty \frac{\lambda_f(n)}{n^{\frac{1}{2}+it}}  W\left(\frac{n}{N}\right) V_{\frac{1}{2}+it}\left(\frac{n}{T}\right)  \Big|^2 dt     + O(T^{-A}) \\
     &\ll \sup_{N \ll T^{1+\epsilon}}  \frac{1}{N}\int_{\mathbb{R}} U\left(\frac{t-T}{M}\right) \Big| \sum_{n=1}^\infty \frac{\lambda_f(n)}{n^{it}} \frac{N^\frac{1}{2}}{n^\frac{1}{2}}  W\left(\frac{n}{N}\right) V_{\frac{1}{2}+it}\left(\frac{n}{T}\right)  \Big|^2 dt     + O(T^{-A}) \\
     & \ll \sup_{N \ll T^{1+\epsilon}} \frac{1}{N}\int_{\mathbb{R}} U\left(\frac{t-T}{M}\right) \Big|  \sum_{n=1}^\infty \frac{\lambda_f(n)}{n^{it}} W_N\left(\frac{n}{N}\right)  \Big|^2 dt    + O(T^{-A}),
\end{align*}
where we define $W_N(t)=t^{-\frac{1}{2}} W(t) V_s\left(\frac{Nt}{T}\right)$ and $W_N$ is supported in $[1,2]$ and satisfies $t^j W^{(j)}_N(t) \ll_j 1$ for $j \geq 0$. Now we expand the absolute value squared, and make a change of variable $t \to Mt+T$ to get
\begin{align*}
    S_f(M,T) &\ll \sup_{N \ll T^{1+\epsilon}} \frac{1}{N}\int_{\mathbb{R}} U\left(\frac{t-T}{M}\right) \sum_{m=1}^\infty \sum_{n=1}^\infty \frac{\lambda_f(n)}{n^{it}}\frac{\overline{\lambda_f(m)}}{\overline{m^{it}}} V\left(\frac{n}{N}\right) \overline{V\left(\frac{m}{N}\right)} dt     + O(T^{-A}) \\
    &\ll \sup_{N \ll T^{1+\epsilon}} \frac{M}{N} \sum_{m=1}^\infty \sum_{n=1}^\infty \lambda_f(n)\overline{\lambda_f(m)} V\left(\frac{n}{N}\right) \overline{V\left(\frac{m}{N}\right)} \left(\frac{m}{n} \right)^{iT} \int_{\mathbb{R}}   U\left(t\right) \left(\frac{m}{n} \right)^{iMt}  dt     + O(T^{-A}).
\end{align*}
Integration by parts to the $t$ integral shows gives arbitrary savings unless $|m-n| \ll \frac{NT^{\epsilon}}{M}.$ We make a change of variables $h=m-n$, this yields
\begin{align*}
    S_f(M,T) &\ll \sup_{N \ll T^{1+\epsilon}} \frac{M}{N} \Bigg| \, \sum_{|h| \ll \frac{NT^{\epsilon}}{M} } \sum_{n=1}^\infty \lambda_f(n)\overline{\lambda_f(n+h)} V\left(\frac{n}{N}\right) \overline{V\left(\frac{n+h}{N}\right)} \left(1+\frac{h}{n} \right)^{iT} \\
    & \times \int_{\mathbb{R}}   U\left(t\right) \left(1+\frac{h}{n} \right)^{iMt}  dt  \,\Bigg|   + O(T^{-A}). 
\end{align*}
For $h=0$, Ramanujan bound on average gives
\begin{equation}\label{h0contri}
    \sum_{n=1}^\infty |\lambda_f(n)|^2 \Big|V\left(\frac{n}{N}\right)\Big|^2  \ll NT^{\epsilon}.
\end{equation}
Thus
\begin{align*}
    S_f(M,T) &\ll MT^\epsilon +\sup_{N \ll T^{1+\epsilon}} \frac{M}{N}  \Bigg| \,\sum_{0<|h| \ll \frac{NT^{\epsilon}}{M} } \sum_{n=1}^\infty \lambda_f(n)\overline{\lambda_f(n+h)} V\left(\frac{n}{N}\right) \overline{V\left(\frac{n+h}{N}\right)} \left(1+\frac{h}{n} \right)^{iT} \\
    & \times \int_{\mathbb{R}}   U\left(t\right) \left(1+\frac{h}{n} \right)^{iMt}  dt \,\Bigg|    + O(T^{-A}).
\end{align*}
Let $\phi_{\pm}(h)=\phi(\pm h )$ where $\phi$ is a fixed function in $ C^\infty_c\left[\frac{1}{2},2\right].$ We perform a smooth dyadic subdivision for the $h$-sum with $h \neq 0$, This gives
\begin{align}\label{combi}
       S_f(M,T) &\ll MT^\epsilon +\sup_{N \ll T^{1+\epsilon}} \frac{M}{N}\sup_{ H \ll \frac{NT^{\epsilon}}{M} }  \left|\sum_{\pm }S(H,N) \right|    + O(T^{-A}),
\end{align}
where we define the shifted convolution sum 
\begin{equation*}
S(H,N)=    \sum_{h \in \mathbb{Z}}\sum_{n=1}^\infty \lambda_f(n) \overline{\lambda_f(n+h)} \left(1+\frac{h}{n}\right)^{iT} W_\pm\left(\frac{h}{H},\frac{n}{N}\right),
\end{equation*}
where $W_\pm(x,y)$ is a $T^{\epsilon}$-inert function supported on $C_c^\infty([1,2] \times \pm[1,2])$ defined by
\begin{equation*}
W_\pm(x,y)=\phi_{\pm} \left(x\right)V\left(y\right) \overline{V\left(y+\frac{Hx}{N}\right)} \int_{\mathbb{R}}   U\left(t\right) \left(1+\frac{Hx}{Ny} \right)^{iMt}  dt.
\end{equation*}
It would be sufficent to exploit the oscillations in $S(H,N)$ and get a sharper bound. We do this in the next sections. For $N \ll T^{\frac{2}{3}+\epsilon},$ the trivial bound suffices, Thus we assume $T^{\frac{2}{3}+\epsilon} \ll N.$

\section{Invoking Delta and Conductor Lowering}\label{six}
Let $
U\in C_c^\infty\!\left(\left[\frac12,\frac72\right]\right)$
be a fixed smooth function satisfying
$U(t)=1$ for all $ t \in [1,3].$
Then \begin{align*}
S(H,N)
&=\sum_{h \in \mathbb{Z}}\sum_{n=1}^{\infty}\sum_{m=1}^{\infty}
\lambda_f(n)\lambda_f(m)
\left(1+\frac{h}{n}\right)^{iT}
U\!\left(\frac{m}{N}\right)
W_\pm\!\left(\frac{h}{H},\frac{n}{N}\right)
\delta(m,n+h).
\end{align*}
We now detect the constraint $m=n+h$ using the delta method. We use delta method(See Lemma $\ref{delta}$) with conductor $Q>NT^{\epsilon}/K$,
\begin{align*}
S(H,N)
&=\frac{1}{|\mathfrak{Q}|K}
\sum_{q\in\mathfrak{Q}}\frac{1}{q}
\sum_{a=0}^{q-1}
\int_{\mathbb{R}}
B\!\left(\frac{v}{K}\right)
\sum_{h \in \mathbb{Z}}\sum_{n=1}^{\infty}\sum_{m=1}^{\infty}
\lambda_f(n)\lambda_f(m)
\left(1+\frac{h}{n}\right)^{iT} \\
&\qquad\times
U\!\left(\frac{m}{N}\right)
W_\pm\!\left(\frac{h}{H},\frac{n}{N}\right)
e\!\left(\frac{a(n+h-m)}{q}\right)
e\!\left(\frac{v(n+h-m)}{N}\right)
\,dv.
\end{align*}
We separate the contributions from the residue classes $a=0$ and $a\neq0$, obtaining
\begin{equation*}
    S(H,N)=S_1(H,N)+O\left(\frac{N^2H}{Q}+T^{-A}\right), 
\end{equation*}
where
\begin{align*}
S_1(H,N)&=\sum_{ h \in \mathbb{Z}} \sum_{ n=1}^\infty \sum_{ m=1}^{\infty} \lambda_f(n) \lambda_f(m) \Big(1+\frac{h}{n}\Big)^{iT} U\left(\frac{m}{N}\right)W_\pm\left(\frac{h}{H}, \frac{n}{N}\right) \\ & \times \frac{1}{|\mathfrak{Q}|K} \sum_{q \in \mathfrak{Q}} \frac{1}{q} \sum_{a=1}^{q-1} e\left(\frac{a}{q}(n+h-m)\right) \int_\mathbb{R} B\left( \frac{v}{K}\right) e\left( \frac{v}{N}(n+h-m)\right) \,dv .
\end{align*}
\section{Dual Summation Formulas}\label{seven}
The analysis for $W_+$ and $W_-$ is identical up to minor modifications. Hence it suffices to consider the contribution from $W_+$. We  apply Poisson summation to the $h$-sum and Voronoi summation formula to $m$ and $n$-sum
\subsection{Analysis of $h$ sum} We begin by applying the Poisson summation formula to the $h$-sum modulo $q$. This gives
\begin{align*}
    \sum_{h \in \mathbb{Z}} e\left( \frac{hv}{N}\right) \left(1+\frac{h}{n}\right)^{iT} W_{\pm}\left(\frac{h}{H}, \frac{n}{N}\right) e\left(\frac{ha}{q}\right) = H\sum_{\substack{h \in \mathbb{Z} \\ h +a\equiv 0 \,\text{mod}\, q }} \int_{\mathbb{R}} & e\left(\frac{vy_1H}{N}\right) e\left( -\frac{  y_1 h H}{q}\right)  \\ & \times\left(1+\frac{Hy_1}{n}\right)^{iT}  W_{+}\left(y_1, \frac{n}{N}\right) \,dy_1.
\end{align*}
Since $h \equiv -a \,\text{mod}\, q$, and $(a,q)=1$, implies $(h,q)=1$. An integration by parts arguement over the $y_1$ integral yields the contribution is negligible unless $h \asymp \frac{TQ}{N}=H_0$(say).
\subsection{Analysis of $m$ sum}
Define 
\begin{equation*}
    V_1(a,q)=\sum_{m=1}^\infty U\left(\frac{m}{N}\right) \lambda_f(m)e\left(- \frac{mv}{N}\right)e\left(- \frac{ma}{q}\right).
\end{equation*}
We apply Voronoi Summation Formula over the $m$-sum
\begin{equation*}
   V_1(a,q)= \frac{2 \pi  i^k}{q} \sum_{m=1}^\infty \lambda_f(m) e\left(\frac{ m \overline{a}}{q}\right) I_1(m,q) ,
\end{equation*}
where \begin{equation*}
    I_1(m,q)= \int_{\mathbb{R}} U\left(\frac{y_3}{N}\right) e\left(- \frac{vy_3}{N}\right) J_{k-1} \left( \frac{4 \pi \sqrt{my_3}}{q}\right) \,dy_3.
\end{equation*}
The weight of the form $f$ is $k$, and $J_{k-1}$ is the $(k-1)$ Bessel function. Using the standard properties we express 
\begin{equation*}
    J_{k-1}(4 \pi z)=e(2z) W_{k-1}(2z)+e(-2z) \overline{W}_{k-1}(2z).
\end{equation*}
Thus we have
\begin{align*}
    \sum_{m=1}^\infty U\left(\frac{m}{N}\right) \lambda_f(m)e\left(-\frac{mv}{N}\right)e\left(-\frac{ma}{q}\right)&= \frac{2 \pi N  i^k}{q} \sum_{\pm} \sum_{m=1}^\infty \lambda_f(m) e\left(\frac{m\overline{a}}{q}\right)   \\ & \times \int_{\mathbb{R}} U\left(y_3\right) e\left(- v y_3 \right) e\left(\pm\frac{2 \sqrt{mNy_3}}{q}\right) W^{\pm}_{k-1} \left( \frac{2 \sqrt{mNy_3}}{q}\right) \,dy_3,
\end{align*}
where $W_{k-1}^+(z)=W_{k-1}(z)$ and $W_{k-1}^-(z)=\overline{W}_{k-1}(z)$ where $W_{k-1}(z)$ is a smooth function satisying $z^j W_{k-1}^{(j)}(z) \ll z^{-1/2}$ for all $j \geq 0$. we have made a change of variable inside the integral to get the last expression. Define $W^\pm_1(z)= \sqrt{z} W_{k-1}^\pm(z)$, Thus we get $V_1(a,q)$ to be
 \begin{align*}
= &\frac{2 \pi N  i^k}{N^{1/4} \sqrt{q}} \sum_{\pm} \sum_{m=1}^\infty \frac{\lambda_f(m)}{m^{1/4}} e\left( \frac{m\overline{a}}{q}\right)   \int_{\mathbb{R}} U\left(y_3\right) e\left(- v y_3 \right) e\left(\pm\frac{2 \sqrt{mNy_3}}{q}\right) W_1^{\pm}\left( \frac{2 \sqrt{mNy_3}}{q}\right) y^{-1/4}_3 \,dy_3 \\
&=\frac{2 \pi N  i^k}{N^{1/4} \sqrt{q}} \sum_{\pm} \sum_{m=1}^\infty \frac{\lambda_f(m)}{m^{1/4}} e\left( \frac{m\overline{a}}{q}\right)   \int_{\mathbb{R}}  e\left(- v y_3 \right) e\left(\pm\frac{2 \sqrt{mNy_3}}{q}\right) U'_{m,q}(y_3) \,dy_3,
\end{align*}
where we define the new smooth weight function
\begin{equation*}
   U^\pm_{m,q}(y_3)= U\left(y_3\right) W_1^{\pm}\left( \frac{2 \sqrt{mNy_3}}{q}\right) y^{-1/4}_3.
\end{equation*}
Integration by parts to the $y_3$ integral shows that the contribution to $V_1(a,q)$ by the $m$-sum is negligible unless $v \asymp \frac{\sqrt{mN}}{q}$, Thus we must have $m \asymp \frac{K^2Q^2}{N}.$
\subsection{Analysis of $n$ sum}
Define 
\begin{equation*}
    V_2(a,q)=\sum_{n=1}^\infty \lambda_{f}(n) \left(1+\frac{Hy_1}{N}\right)^{iT} e\left(\frac{nv}{N}\right) W_{+}\left(y, \frac{n}{N}\right)e\left( \frac{na}{q}\right).
\end{equation*}
Application of Voronoi summation formula on the $n$-sum gives
\begin{align*}
    V_2(a,q)=\frac{2 \pi i^k}{q} \sum_{n=1}^\infty \lambda_{f}(n) e\left(- \frac{n\overline{a}}{q}\right) I_2(n),
\end{align*}
where \begin{equation*}
    I_2(n)=\int_{\mathbb{R}} e\left(\frac{vy_2}{N}\right) \left(1+\frac{Hy_1}{y_2}\right)^{iT} W_{+}\left(y, \frac{y_2}{N}\right) J_{k-1}\left(\frac{4 \pi \sqrt{ny_2}}{q}\right) \,dy_2.
\end{equation*}
The weight of the form $f$ is $k$, and $J_{k-1}$ is the $(k-1)$ Bessel function. Using the standard properties we express 
\begin{equation*}
    J_{k-1}(4 \pi z)=e(2z) W_{k-1}(2z)+e(-2z) \overline{W}_{k-1}(2z).
\end{equation*}
Thus we have
\begin{align*}
    V_2(a,q)= \frac{2 \pi N  i^k}{q} \sum_{\pm}  \sum_{n=1}^\infty \lambda_{f}(n) e\left(-n \frac{\overline{a}}{q}\right) & \int_{\mathbb{R}} e\left(vy_2\right) \left(1+\frac{Hy_1}{Ny_2}\right)^{iT} W_{+}(y_1, y_2) \\ & \times e\left(\pm \frac{2 \sqrt{nNy_2}}{q}\right) W^{\pm}_{k-1}\left(\frac{2 \sqrt{nNy_2}}{q}\right) \,dy_2,
\end{align*}
where $W_{k-1}^+(z)=W_{k-1}(z)$ and $W_{k-1}^-(z)=\overline{W}_{k-1}(z)$ where $W_{k-1}(z)$ is a smooth function satisying $z^j W_{k-1}^{(j)}(z) \ll \frac{1}{\sqrt{z}}$ for all $j \geq 0$. We have made a change of variable inside the integral to get the last expression. Define $W^\pm_1(z)= \sqrt{z} W^\pm_{k-1}(z)$, We get
\begin{align*}
    V_2(a,q)&= \frac{2 \pi N  i^k}{N^{\frac{1}{4}} \sqrt{q}} \sum_{\pm}  \sum_{n=1}^\infty \frac{\lambda_{f}(n)}{n^\frac{1}{4}} e\left(-n \frac{\overline{a}}{q}\right) \int_{\mathbb{R}} e\left(vy_2\right) \left(1+\frac{Hy_1}{Ny_2}\right)^{iT} W_{+}(y_1, y_2) e\left(\pm \frac{2 \sqrt{nNy_2}}{q}\right) \\& \times W_1^{\pm}\left(\frac{2 \sqrt{nNy_2}}{q}\right) y^{-1/4}_2 \,dy_2 \\
    &= \frac{2 \pi N  i^k}{N^{\frac{1}{4}} \sqrt{q}} \sum_{\pm}  \sum_{n=1}^\infty \frac{\lambda_{f}(n)}{n^\frac{1}{4}} e\left(-n \frac{\overline{a}}{q}\right) \int_{\mathbb{R}} e\left(vy_2\right)  e\left(\pm \frac{2 \sqrt{nNy_2}}{q}\right) \left(1+\frac{Hy_1}{Ny_2}\right)^{iT} U^{\pm}_{n,q}(y_1,y_2) \,dy_2,
\end{align*}
where we define the new smooth weight function
\begin{equation*}
   U^{\pm}_{n,q}(y_1,y_2)= W_{+}(y_1, y_2) W_1^{\pm}\left(\frac{2 \sqrt{nNy_2}}{q}\right) y^{-1/4}_2 .
\end{equation*}
Integration by parts(see Lemma~\ref{ibparts}) over the $y_2$ integral shows that the contribution is negligible unless $v \asymp \frac{\sqrt{nN}}{q}$, We conclude that the contribution of $n$-sum to $V_2(a,q)$ is negligible unless $n \asymp \frac{K^2Q^2}{N}$.
\subsection{Conclusion}
We conclude this section with the following estimate
\begin{align*}
    S(H,N) \ll \frac{HN^{3/2}}{|\mathfrak{Q}|K} \sum_{q \in \mathfrak{Q}} \frac{1}{q^2} \sum_{\substack{ h \sim \frac{TQ}{N} \\ (h,q)=1 }}  \sum_{ n \asymp \frac{K^2q^2}{N}} \frac{\lambda_f(n)}{n^{1/4}} \sum_{m \asymp \frac{K^2q^2}{N}}^\infty \frac{\lambda_f(m)}{m^{1/4}} e\left(\frac{\overline{h}(n-m) }{q}\right) \mathfrak{I}   +O(T^{-A}),
\end{align*}
where $\mathfrak{I}$ is an integral of the form(with possible sign changes)
\begin{align*}
\mathfrak{I}&=  \int_{\mathbb{R}}    
B\left(\frac{v}{K}\right) \times \int_{\mathbb{R}} e\left(-vy_3\right) e\left( \frac{2 \sqrt{mNy_3}}{q}\right) U_{m,q}(y_3) dy_3  \\ & \times
\int_{\mathbb{R}} \left(e\left(vy_2\right)e\left( - \frac{2 \sqrt{nNy_2}}{q}\right)\int_{\mathbb{R}} e\left(-\frac{hHy_1}{q}\right)\left(1+\frac{Hy_1}{Ny_2}\right)^{iT} e\left(\frac{vHy_1}{N}\right)U_{n,q}\left(y_1,y_2\right)  \right)  dy_1 dy_2
 dv.
\end{align*}
\section{Analysis of the Integral Transform}\label{eight}
The analysis of Integral amongst the possible sign changes will be similar. Throughout this section, By Example ~\ref{expo}, We absorb the non-oscillatory factors into the $T^\epsilon-$inert functions. 

To be specific, we work with the following integral. 
Define $$u=y_2-y_3+\frac{Hy_1}{N} \text{ }\text{ and } \text{ }v= \frac{2 \sqrt{mNy_3}}{q}- \frac{2 \sqrt{nNy_2}}{q}-\frac{hHy_1}{q}.$$
Then the integral can be written as
\begin{equation*}
\mathfrak{I}=\int_{\mathbb{R}} \int_{\mathbb{R}} \int_{\mathbb{R}} \int_{\mathbb{R}}  
    B\left(\frac{v}{K}\right) 
    e(uv+w) \left(1+\frac{Hy_1}{Ny_2}\right)^{iT}
    U_{m,q}(y_3) U_{n,q}\left(y_1,y_2\right)
    \,dv \,dy_1 \,dy_2 \,dy_3.
\end{equation*}
 Integration by parts over the $v$ integral shows that the contribution to $\mathfrak{I}$ is negligible unless $u \ll \frac{T^\epsilon}{K}$. Substitute $y_3=y_2+\frac{Hy_1}{N}-u$ to get
\begin{align*}
\mathfrak{I}&=\int_{\mathbb{R}} \int_{\mathbb{R}} \int_{\mathbb{R}} \int_{ u \ll \frac{T^\epsilon}{K}}  
    B\left(\frac{v}{K}\right)
    e(uv) 
    e\left(\frac{2 \sqrt{mNy_2(1+\frac{Hy_1}{Ny_2})}}{q}\right)  \\ & \times e\left(-\frac{2 \sqrt{nNy_2}}{q}-\frac{hHy_1}{q}\right) \left(1+\frac{Hy_1}{Ny_2}\right)^{iT}
    U_{m,q}(y_3) U_{n,q}\left(y_1,y_2\right)
      \,dy_1 \,dv \,dy_2 \,du.
\end{align*}
We focus on the $y_1$ integral, and we make a change of variables sending $y_1 \to y_1 y_2$, This would give
\begin{align*}
   \mathfrak{I}&=\int_{\mathbb{R}} \int_{\mathbb{R}} \int_{\mathbb{R}} y_2 \int_{ u \ll \frac{T^\epsilon}{K}}  
    U_{m,q}(y_3) U_{n,q}\left(y_1y_2,y_2\right) B\left(\frac{v}{K}\right) 
    e(uv) 
    e\left(\frac{2 \sqrt{mNy_2(1+\frac{Hy_1}{N})}}{q}\right)  \\ & \times e\left(- \frac{2 \sqrt{nNy_2}}{q}-\frac{hHy_1y_2}{q}\right) \left(1+\frac{Hy_1}{N}\right)^{iT}
     \,dy_1\,dv\,dy_2 \,du.
\end{align*}
We use the following Taylor series expansion for $\log(1+x)$
\begin{align*}
    \log(1+x)= x-\frac{x^2}{2}+\frac{x^3}{3}-\frac{x^4}{4} \dots,
\end{align*}
and we get
\begin{align*}
    \left(1+\frac{Hy_1}{N}\right)^{iT}=e\left(\frac{T}{2 \pi} \log\left(1+\frac{Hy_1}{N}\right)\right)=e\left(\frac{HTy_1}{2\pi N}-\frac{H^2Ty^2_1}{4 \pi N^2}\right) e\left(O\left(\frac{TH^3y^3_1}{N^3}\right)\right).
\end{align*}
The last term in the product is flat. Thus we get
\begin{align*}
\mathfrak{I}&=\int_{\mathbb{R}} \int_{\mathbb{R}} \int_{\mathbb{R}} y_2 \int_{ u \ll \frac{T^\epsilon}{K}}  
    U_{m,q}(y_3) U_{n,q}\left(y_1y_2,y_2\right) B\left(\frac{v}{K}\right) e(uv) 
    e\left(\frac{2 \sqrt{mNy_2(1+\frac{Hy_1}{N})}}{q}\right)  \\ & \times e\left(- \frac{2 \sqrt{nNy_2}}{q}\right) e\left(-\frac{hHy_1y_2}{q}+\frac{HTy_1}{2\pi N}-\frac{H^2Ty^2_1}{4 \pi N^2}\right) 
     \,dy_1\,dv\,dy_2 \,du \\
     &=\int_{\mathbb{R}} \int_{\mathbb{R}} \int_{\mathbb{R}} y_2 \int_{ u \ll \frac{T^\epsilon}{K}}  
    U_{m,q}(y_3) U_{n,q}\left(y_1y_2,y_2\right) B\left(\frac{v}{K}\right) e(uv) 
    e\left(  \frac{2\sqrt{mNy_2}}{q}-\frac{2 \sqrt{nNy_2}}{q}\right)\\ & \times e\left(\frac{Hy_1\sqrt{mNy_2}}{Nq}-\frac{hHy_1y_2}{q}\right)  e\left( O\left(\frac{2\sqrt{mNy_2}}{q} \frac{H^2y^2_1}{N^2}\right)\right)   e\left(\frac{HTy_1}{2\pi N}-\frac{H^2Ty^2_1}{4 \pi N^2}\right) 
     \,dy_1\,dv\,dy_2 \,du\\
     &=\int_{\mathbb{R}} \int_{\mathbb{R}} \int_{\mathbb{R}} y_2 \int_{ u \ll \frac{T^\epsilon}{K}}  
    U_{m,q}(y_3) U_{n,q}\left(y_1y_2,y_2\right) B\left(\frac{v}{K}\right) e(uv) 
    e\left(  \frac{2\sqrt{mNy_2}}{q}-\frac{2 \sqrt{nNy_2}}{q}\right)\\ & \times e\left(\frac{Hy_1\sqrt{mNy_2}}{Nq}-y_1 y_3 \frac{Hh}{q}\right)   e\left(-\frac{H^2Ty^2_1}{4 \pi N^2}\right) 
     \,dy_1\,dv\,dy_2 \,du .
\end{align*}
The term involving $e\left(O\left(\frac{2\sqrt{mNy_2}}{q} \frac{H^2y^2_1}{N^2}\right)\right)$ is flat and we absorb it into the weight function. An Integration by parts argument shows that the integral over $y_1$ is negligible unless 
\begin{align*}
    \frac{Hh}{q}\left(-y_2+\frac{qT}{2 \pi h N}\right) \ll \frac{T}{M^2} 
 \implies
 -y_2+\frac{qT}{2 \pi h N} \ll\frac{TQ}{M^2HH_0}.
 \end{align*}
 Substitute $y_2=y_3+\frac{qT}{2 \pi h N}$ with $y_3\ll \frac{TQ}{M^2HH_0}$. This would imply
\begin{align*}
    e\left(\frac{Hy_1}{q}\left(\frac{\sqrt{mNy_2}}{N}\right)\right)&=e\left(\frac{H\sqrt{mN}y_1}{qN}\sqrt{\frac{qT}{2\pi N h}}\right)e\left(\sqrt{\frac{qTm}{2 \pi h}}\frac{y_3 \pi h Hy_1 }{q^2T}\right),
\end{align*}
and
\begin{equation*}
        e\left(\frac{2 \sqrt{Ny_2}}{q}(\sqrt{m}-\sqrt{n})\right)=e\!\left(2 \sqrt{\frac{mT}{2 \pi qh }}\right)e\!\left(-2 \sqrt{\frac{nT}{2 \pi qh }}\right) e\!\left(N y_3\sqrt{\frac{2\pi hm}{q^3T}}\right)e\!\left(-N y_3\sqrt{\frac{2\pi hn}{q^3T}}\right).
\end{equation*}
Thus we get
\begin{align*}
 \mathfrak{I}&=\int_{\mathbb{R}} \int_{\mathbb{R}}  \int_{ u \ll \frac{T^\epsilon}{K}} \int_{ y_3 \ll \frac{T^\epsilon}{M}}  
    y_2 U_{m,q}(y_3) U_{n,q}\left(y_1y_2,y_2\right) B\left(\frac{v}{K}\right) e(uv)  \\ & \times
    e\!\left(2 \sqrt{\frac{mT}{2 \pi qh }}-2 \sqrt{\frac{nT}{2 \pi qh }}+N y_3\sqrt{\frac{2\pi hm}{q^3T}}-N y_3\sqrt{\frac{2\pi hn}{q^3T}}\right) \\ & \times e\left(\frac{H\sqrt{mN}y_1}{qN}\sqrt{\frac{qT}{2\pi N h}}+\sqrt{\frac{qTm}{2 \pi h}}\frac{y_3 \pi h Hy_1 }{q^2T}-\frac{hHy_1 y_3}{q}\right)     e\left(-\frac{H^2Ty^2_1}{4 \pi N^2}\right) 
     \,dy_3\,dy_1\,dv \,du.
\end{align*} 
In the region $ y_3 \ll \frac{N}{HT} $, the $y_3$ integral is flat, thus we assume that $ N/HT \ll y_3$. We make a change of variable sending $y_3 \to \frac{TQ}{M^2HH_0} y_3$, This gives
\begin{align*}
        \mathfrak{I}&=\frac{TQ}{M^2HH_0}\int_{\mathbb{R}} \int_{\mathbb{R}}  \int_{ u \ll \frac{T^\epsilon}{K}} \int_{ y_3 \ll T^\epsilon}  
    y_2 U_{m,q}(y_3) U_{n,q}\left(y_1y_2,y_2\right) B\left(\frac{v}{K}\right) e(uv)  \\ & \times
    e\!\left(2 \sqrt{\frac{mT}{2 \pi qh }}-2 \sqrt{\frac{nT}{2 \pi qh }} \right) e\left( -  \frac{hH}{q} \frac{TQ}{M^2Hh} y_3 \left(y_1-\frac{N}{H}\sqrt{\frac{2\pi m}{qhT}}+\frac{N}{H}\sqrt{\frac{2\pi n}{hqT}} \right)\right) \\ & \times e\left(\frac{H\sqrt{mN}y_1}{qN}\sqrt{\frac{qT}{2\pi N h}}\right)     e\left(-\frac{H^2Ty^2_1}{4 \pi N^2}\right) 
     \,dy_3 \,dy_1\,dv \,du.
\end{align*}
Integration by parts shows that the integral is negligible unless 
\begin{align*}
z=y_1-\frac{N}{H}\sqrt{\frac{2\pi m}{qhT}}+\frac{N}{H}\sqrt{\frac{2\pi n}{hqT}} \ll \frac{M^2}{T}.
\end{align*}
Thus we assume that $z \ll \frac{M^2}{T}$ and substitute $y_1=z+\frac{Nq}{H}\sqrt{\frac{2\pi m}{qhT}}-\frac{N}{H}\sqrt{\frac{2\pi n}{hqT}}$. Then one gets 
\begin{equation*}
    e\left(\frac{H\sqrt{mN}y_1}{qN}\sqrt{\frac{qT}{2\pi N h}}\right)e\left(-\frac{H^2Ty^2_1}{4 \pi N^2}\right)=e\left(\frac{m-n}{2qh}\right) e(O(1)).
\end{equation*}
Thus
\begin{align*}
        \mathfrak{I}&=\frac{TQ}{M^2HH_0}\int_{\mathbb{R}} \int_{\mathbb{R}}  \int_{ u \ll \frac{T^\epsilon}{K}} \int_{\frac{M^2}{T}}^{  T^\epsilon}  
     U^\pm_{m,q}(y_3) U^\pm_{n,q}\left(y_1y_2,y_2\right) B\left(\frac{v}{K}\right) e(uv) \\ & \times
    e\!\left(2 \sqrt{\frac{mT}{2 \pi qh }} \right)e\left(-2 \sqrt{\frac{nT}{2 \pi qh }} \right) e\left( -\frac{hNz y_3}{M^2q}\right)  e\left(\frac{m-n}{2qh}\right)
     \,dy_3\,dy_1\,dv \,du.
\end{align*}
We conclude this section with the following estimate on $S(H,N)$. Combining the analysis of the integral yields
\begin{align*}
    S(H,N) \ll & \frac{T^{1+\epsilon}N^\frac{3}{2}}{M^2KH_0}   \sum_{q \sim Q} \frac{1}{q^2} \sum_{h}  \sum_{m \asymp \frac{K^2Q^2}{N}} e\left( \frac{2\sqrt{mT}}{\sqrt{2 \pi q h}} \right)e\left( \frac{m}{2qh}\right)e\left(\frac{-\overline{h}m}{q} \right) \frac{\lambda_f(m)}{m^{1/4}} \\ & \times \sum_{n \asymp \frac{K^2Q^2}{N}} e\left( \frac{-2\sqrt{nT}}{\sqrt{2 \pi q h}} \right)e\left( \frac{-n}{2qh}\right)e\left(\frac{\overline{h}n}{q} \right) \frac{\lambda_f(n)}{n^{1/4}} \\ & \times  \int_{u} \int_{v} B\left(\frac{v}{K}\right) e(uv) \int_{y_3} \int_z e\left(-\frac{hN}{M^2q}z y_3\right) U'(m,n,q,h,y_3,z) \,dy_3 \,dz + O(T^{-A}),
\end{align*}
where $U'(m,n,q,h,y_3,z)$ is an $T^\epsilon$-inert function. Before proceeding with an application of Cauchy's inequality, we need to separate the variables $m$ and $n$ in the weight function $U'$. We use the Fourier inversion trick to achieve that since the variables $m$ and $n$ are related by a linear relation.
\section{Fourier Inversion- Separation of Variables}\label{nine}
Our weight function has the form $
U_{m,q}(y_3)\,U_{n,q}(y_1y_2,y_2)
$. Both factors depend on the variables $m$ and $n$ through the relations
\[
y_3=y_2+\frac{Hy_1}{N}-u,
\qquad
y_1=z+\frac{N}{H}\sqrt{\frac{2\pi m}{qhT}}
-\frac{N}{H}\sqrt{\frac{2\pi n}{hqT}}.
\]
Since $m$ and $n$ are related linearly, We apply Fourier Inversion to separate the dependency on both $m$ and $n$. We write
\begin{align*}
    U_{n,q}\left(y_1y_2,y_2\right)& = \int_{|\eta | \ll T^\epsilon} \hat{U}_{n,q}\left( \eta y_2,y_2\right) e(-\eta y_1) d\eta + O(T^{-A}) \\
    & = \int_{|\eta | \ll T^\epsilon} \hat{U}_{n,q}\left( \eta y_2,y_2\right) e(-\eta z)e\left(-\eta \frac{N}{H}\sqrt{\frac{2\pi m}{qhT}}\right)e\left( \eta \frac{N}{H}\sqrt{\frac{2\pi n}{hqT}}\right) d\eta + O(T^{-A}),
\end{align*}
Since $U_{n,q}$ is a $T^\varepsilon$-inert function, its Fourier transform decays rapidly. Hence the contribution from $|\eta|\gg T^\varepsilon$ is $O(T^{-A})$ for every $A>0$. An analogous decomposition also holds for $U_{m,q}(y_3)$. Thus we obtain the following upper bound for $S(H,N)$:
\begin{align*}
    S(H,N) \ll \frac{TN^\frac{3}{2}}{M^2KH_0} 
    \int_u \int_v B\left(\frac{v}{K}\right) e(uv) \int_{\substack{z \\ y_3 \sim T^\epsilon }}  \int_{\substack{\eta_1 \ll T^\epsilon  \\ \eta_2 \ll T^\epsilon}} \sum_q \sum_h \frac{e\left(\frac{Hhz y_3}{Mq}\right)}{q^2} \left(\sum_m \dots \right) \left(\sum_n \dots \right).
\end{align*}
Application of Cauchy's inequality to the $q$ and $h$ sum yields
\begin{align*}
    S(H,N) \ll \frac{TN^\frac{3}{2}}{M^2KH_0} 
    \int_{u \ll \frac{1}{K}} \int_{v \ll K }B\left(\frac{v}{K}\right) e(uv) \int_{y_3 \sim 1} \int_{z \ll \frac{M^2}{T}}  \int_{\substack{\eta_1 \ll T^\epsilon  \\ \eta_2 \ll T^\epsilon}} S_1^\frac{1}{2} S_2^\frac{1}{2} \,d\eta_1 \,d\eta_2 \,dz \,dy_3 \,dv \,du,
\end{align*}
where
\begin{align*}
   S_1=\sum_q \sum_h \Big|\sum_{n \asymp \frac{K^2Q^2}{N}} e\left(  \frac{-2 \sqrt{nT}}{\sqrt{2 \pi q h}}  \right) e\left(   \frac{-n}{2q h}   \right) e\left( \frac{\overline{h}n}{q}\right) \frac{\lambda_f(n)}{n^\frac{1}{4}} U''_{n,q}(\eta_1) e\left(\frac{N}{H}\sqrt{\frac{2\pi n }{hqT}} \eta \right) \Big|^2,
\end{align*}
and
\begin{align*}
   S_2=\sum_q \sum_h \Big|\sum_{m \asymp \frac{K^2Q^2}{N}} e\left( \frac{2\sqrt{mT}}{\sqrt{2 \pi q h}} \right)e\left( \frac{m}{2qh}\right)e\left(\frac{-\overline{h}m}{q} \right) \frac{\lambda_f(m)}{m^{1/4}} U''_{m,q}(\eta_2) e\left(-\frac{N}{H}\sqrt{\frac{2\pi m }{hqT}} \eta\right) \Big|^2.
\end{align*}
where $\eta=\eta_1+\eta_2$. The analysis for both $S_1$ and $S_2$ will be simmilar, so we focus on the analysis of $S_1$.
\section{Analysis of $S_1$}\label{ten}
In this section, we focus on the analysis of
\begin{align*}
   S_1=\sum_q \sum_h \Big|\sum_{n \ll \frac{K^2Q^2}{N}} e\left(  \frac{-2 \sqrt{nT}}{\sqrt{2 \pi q h}}  \right) e\left(   \frac{-n}{2q h}   \right) e\left( \frac{\overline{h}n}{q}\right) \frac{\lambda_f(n)}{n^\frac{1}{4}}  U''_{n,q}(\eta_1) e\left(-\frac{N}{H}\sqrt{\frac{2\pi n }{hqT}} \eta\right) \Big|^2.
\end{align*}
Note that by Ramanujan Bounds on Average.
\begin{align*}
 \sum_{n \asymp \frac{K^2Q^2}{N}}    \Big| \frac{\lambda_f(n)}{n^\frac{1}{4}} \Big|^2 \ll \left(\frac{K^2Q^2}{N}\right)^{-\frac{1}{2}}  \sum_{n \ll \frac{K^2Q^2}{N}}    |\lambda_f(n)|^2 \ll \left(\frac{K^2Q^2}{N}\right)^{\frac{1}{2}+\epsilon}.
\end{align*}
Now an application of Duality(See Lemma \ref{dualitylemma}), gives
\begin{align*}
   S_1 & \ll  \frac{KQ}{\sqrt{N}}\sup_{\|\alpha\|=1} \sum_{n \asymp \frac{K^2Q^2}{N}} \Bigg| \sum_q \frac{1}{q} \sum_h \alpha(q,h) e\left(  \frac{-2 \sqrt{nT}}{\sqrt{2 \pi q h}}  \right) e\left(   \frac{-n}{2q h}   \right) e\left( \frac{\overline{h}n}{q}\right) e\left(-\frac{N}{H}\sqrt{\frac{2\pi n }{hqT}} \eta \right) X(q,h,n) \Bigg|^2,
\end{align*}
where $X(q,h,n)$ are $T^\epsilon$-inert functions. We expand the absolute value squared, this would yield
\begin{align}\label{s1poisson}
    S_1 &\ll \frac{KQ}{\sqrt{N}}\sup_{\|\alpha\|=1} \sum_{q,q'} \frac{1}{qq'} \sum_{h,h'} \alpha(q,h) \overline{\alpha}(q',h') \sum_{n \asymp \frac{K^2Q^2}{N}}  F(n) e\left( n\frac{\overline{h}q'-\overline{h'}q}{qq'}\right),
\end{align}
where 
\begin{align*}
    F(n)= e\left(  \frac{-2 \sqrt{nT}}{\sqrt{2 \pi q h}} + \frac{2 \sqrt{nT}}{\sqrt{2 \pi q' h'}}  \right)  e\left(   \frac{-n}{2q h} + \frac{n}{2q' h'}   \right) & e\left(\frac{ \sqrt{2}N \pi \eta}{HT} \left(\sqrt{\frac{ T n }{q'h' \pi }}  - \sqrt{\frac{ T n }{qh \pi }}\right) \right) \\ & \times X(q,h,n) X(q',h',n),
\end{align*} 
Define $\Delta=q'h'-qh$. We separate the terms where $\Delta=0$ and $\Delta \neq 0$. When $\Delta=0$, We trivially estimate the summation which gives the contribution to $S_1$ to be
\begin{align*}
      S_1(\Delta=0) &\ll \frac{KQ}{\sqrt{N}}\sup_{\|\alpha\|=1} \sum_{q,q'} \frac{1}{qq'} \sum_{h,h'} |\alpha(q,h)|^2 \sum_{n \asymp \frac{K^2Q^2}{N}}  F(n) e\left( n\frac{\overline{h}q'-\overline{h'}q}{qq'}\right) \\
      &\ll \frac{KQ}{\sqrt{N}} \frac{K^2Q^2}{N} \frac{1}{Q^2}\sup_{\|\alpha\|=1} \sum_{q} \sum_{h} |\alpha(q,h)|^2 \sum_{\substack{q' , h'  \\ q'h'=qh}} 1 \\
      &\ll \frac{KQ}{\sqrt{N}} \frac{K^2Q^2}{N} \frac{1}{Q^2} T^\epsilon = \frac{K^3Q}{N^\frac{3}{2}} T^\epsilon.   
\end{align*}
Thus we conclude that
\begin{equation}\label{s1bounds}
      S_1(\Delta=0) \ll  \frac{K^3Q}{N^\frac{3}{2}} T^\epsilon.
\end{equation}
When $\Delta \neq 0,$ We apply Poisson summation formula(see Lemma $(\ref{poissonmodq})$ to the $n$-sum in equation $\ref{s1poisson}$. Thus
\begin{align*}
    \sum_{n \asymp \frac{K^2Q^2}{N}} F(n) e\left( n\frac{\overline{h}q'-\overline{h'}q}{qq'}\right) &= \sum_{n \in \mathbb{Z}} U\left(\frac{nN}{K^2Q^2}\right)F(n) e\left( n\frac{\overline{h}q'-\overline{h'}q}{qq'}\right) \\
    & =\sum_{ \substack{ n \in \mathbb{Z} \\ n \equiv \overline{h'}q-\overline{h}q' \,\text{mod}\, qq'} } J_n,
\end{align*}
where \begin{equation*}
    J_n=\frac{K^2Q^2}{N} \int_{\mathbb{R}} U(t) e\left(\A t -\B \sqrt{t}\right) \,dt,
\end{equation*}
where we define
\begin{equation*}
    \A=-\frac{K^2Q^2}{N}\left(\frac{n}{qq'}+\frac{1}{qh}-\frac{1}{q'h'}\right),
\end{equation*}
and
\begin{equation*}
    \B=\frac{2KQ\sqrt{T}}{\sqrt{2 \pi N}}  \left(1 +\frac{N \pi \eta}{HT} \right)\left(\frac{1}{\sqrt{qh}}-\frac{1}{\sqrt{q'h'}} \right).
\end{equation*}
We conclude this section with the following result.
\begin{lemma}\label{bounds1}
We have
\begin{equation*}
    S_1 = S_1(\Delta=0) + S_1(\Delta \neq 0),
\end{equation*}
where
\begin{equation}\label{s1bounds}
    S_1(\Delta =0 ) \ll  \frac{K^3Q}{N^\frac{3}{2}} T^\epsilon,
\end{equation}
and
\begin{align*}
    S_1(\Delta \neq 0) & \ll  T^\epsilon\frac{K^3Q^3}{N^\frac{3}{2}}\sup_{\|\alpha\|_2=1} \sum_{q,q'} \frac{1}{qq'} \sum_{\substack{h,h' \\ \Delta \neq 0}} \alpha(q,h) \overline{\alpha}(q',h') \sum_{ \substack{ n \in \mathbb{Z} \\ n \equiv \overline{h'}q-\overline{h}q' \,\text{mod}\, qq'} } \int_{\mathbb{R}} U(t) e\left(  \A t - \B  \sqrt{t} \right) \,dt. 
\end{align*}
		where the supremum is taken over all sequences of complex numbers $\alpha(q,h)$ such that $$\|\alpha\|_2=\sqrt{\sum_{q,h}|\alpha(q,h)|^2}=1.$$
\end{lemma}

\section{Phase Analysis}\label{eleven}

To get bounds on $S_1(\Delta \neq 0)$, we analyze the oscillatory integral
\begin{equation*}
    \int_{\mathbb{R}} U(t)e\left( \A t-\B \sqrt{t}\right)\,dt.
\end{equation*}

Recall that
\begin{equation*}
    \A
    =
    -\frac{K^2Q^2}{N}
    \left(
    \frac{n}{qq'}
    +
    \frac{1}{qh}
    -
    \frac{1}{q'h'}
    \right)
    =
    -\frac{K^2Q^2}{N}
    \left(
    \frac{n}{qq'}
    +
    \frac{\Delta}{(QH_0)^2}
    \right),
\end{equation*}
and
\begin{equation*}
    \B
    =
    \frac{2KQ\sqrt{T}}{\sqrt{2\pi N}}
    \left(
    \frac{1}{\sqrt{qh}}
    -
    \frac{1}{\sqrt{q'h'}}
    \right)
    \asymp
    \frac{KQ\sqrt{T}}{\sqrt{N}} \left(1+\frac{N \pi \eta }{HT}\right)
    \frac{\Delta}{(QH_0)^{3/2}}.
\end{equation*}
Since $|\eta|\ll T^\epsilon,\ N\ll T^{1+\epsilon},\ \text{and}\ H\gg T^\epsilon$, we have $N \pi \eta/HT \ll o(1).$
Hence
\[
\B
\asymp
\frac{KQ \sqrt{T}}{\sqrt{N}}
\left(
\frac{1}{\sqrt{qh}}
-
\frac{1}{\sqrt{q'h'}}
\right).
\]
Suppose first that
\[
n \gg \frac{\Delta}{H_0^{2}}.
\]
In this range, the term $n/qq'$ dominates the contribution to $\A$. By Lemma ~\ref{ibparts}, the integral is negligible unless
\[
|\A| \asymp |\B| .
\]
Consequently,
\begin{equation}\label{sizeofn}
    n
    \asymp
    \frac{Q\sqrt{TN}\,\Delta}
         {K(QH_0)^{3/2}}.
\end{equation}
Next suppose that
\[
n \ll \frac{\Delta}{H_0^{2}}.
\]
Then the second term in $\A$ dominates, and
\begin{equation*}
    \frac{|\B |}{|\A |}
    \asymp
    \frac{KQ\sqrt{T}\Delta}
         {\sqrt{N}(QH_0)^{3/2}}
    \cdot
    \frac{N(QH_0)^2}
         {K^2Q^2\Delta}
    =
    \frac{\sqrt{TNQH_0}}{KQ}
    =M.
\end{equation*}
Hence
\[
T^\epsilon |\A| \ll |\B|.
\]
Repeated integration by parts(See Lemma~\ref{ibparts}) implies that the integral is negligible whenever $|\B |\gg T^\epsilon. $ On the other hand, if $|\B |\ll T^\epsilon,$
then $\Delta\ll\frac{Q^2MT^\epsilon}{N}.$ Thus contribution to $S_1(\Delta\neq 0)$ from the range
\[
n\ll\frac{\Delta}{H_0^{2}}
\qquad \text{ and } \qquad
\Delta\ll\frac{Q^2MT^\epsilon}{N},
\]
is $O(T^{-A})$ since it forces $n = 0.$ We are therefore left with the case in which $\frac{n}{qq'}$ is the dominant term in $\A$. In this situation, We estimate the oscillatory integral appearing in Lemma~\ref{bounds1} using the second derivative test (see Lemma~\ref{secondd}). The phase function is
\[
\phi(t)=\A t-\B\sqrt{t},
\]
so that
\[
\phi''(t)
=
\frac{\B}{4t^{3/2}}
\asymp
\B.
\]
Hence, by Lemma~\ref{secondd},
\[
\left|
\int_{\mathbb{R}}
U(t)e\left(\A t-\B\sqrt{t}\right)\,dt
\right|
\ll
\B^{-1/2}.
\]
We partition the sum $S_1(\Delta \neq 0)$ into two disjoint cases according to the arithmetic relations among the variables $n$, $q$, $q'$, $h$, and $h'$, since each case requires a different method of estimation. When $n=0$, the congruence condition forces $q=q'$. Since $\Delta \neq 0$ implies $h\neq h'$. We denote its contribution to $S_1(\Delta \neq 0)$ by $S_{11}.$ 
Moving to the non-zero frequency case, When $n\neq0$, We consider the case when $q=q'$. Since $\Delta>0$, we necessarily have $h\neq h'$. The congruence condition implies that $q\mid n$. As $n \neq 0$, this gives $Q \ll |n|$. However since $\Delta \ll QH_0,$ thus equation \ref{sizeofn} implies, 
\begin{equation*}
    n \ll \frac{NMT^\epsilon}{T} \ll \frac{N T^\epsilon}{K} \ll Q.
\end{equation*}
Our choice $N/K \ll Q$ forces $n=0$, This proves that the contribution from $n \neq 0$ and $q= q'$ is negligible. On the other hand, We denote the contribution of $n \neq 0$ and $q \neq q'$ to $S_1(\Delta \neq 0)$ by $S_{12}$. Thus we have $$S_1(\Delta \neq 0)=S_{11}+S_{12}.$$
\subsection{Zero Frequency}- When $n=0$, then the condition $0 \equiv \overline{h'}q-\overline{h}q' \,\text{mod}\, qq'$ implies $ q$ divides $ q'$, Since $q$ and $q'$ are primes, this forces $q=q'$ and thus $h \equiv h' \,\text{mod}\, q$. Since $\Delta \neq 0$ implies $h \neq h'$, One can see that for $n=0$ and $q=q'$ implies 
\begin{equation*}
|\B| \gg \frac{KQ \sqrt{T}}{\sqrt{N}} \frac{Q^2 N^\frac{3}{2}}{T^\frac{3}{2} Q^3}=\frac{KN}{T}=\frac{N}{M} \gg T^{\frac{1}{6}-\epsilon},
\end{equation*}
and \begin{equation*}
    \frac{|\B|}{|\A|} \asymp \frac{KQ\sqrt{T}\Delta}{\sqrt{N}(QH_0)^{\frac{3}{2}}} \frac{N(QH_0)^2}{K^2Q^2 \Delta} = \frac{\sqrt{TNQH_0}}{KQ}=M
\end{equation*}
Since $\frac{|\B|}{|\A|} \gg M \gg T^\frac{1}{3}$, By an application of integration by parts, we conclude that
\begin{equation}\label{s11bounds}
    S_{11}\ll T^{-A} \text{ }\text{ for any }\text{ } A \geq 0.
\end{equation}
\subsection{Non-Zero Frequency}
Now we focus on the $S_{12}.$ Since $q$ and $q'$ are distinct primes, The congruence condition $$n \equiv \overline{h'}q-\overline{h}q' \,\text{mod}\, qq'$$ implies $(n,q)=(n,q')=1$  and thus $h \equiv -q'\overline{n} \,\text{mod}\, q $ and $h' \equiv q\overline{n} \,\text{mod}\, q'$
\begin{align*}
    S_{12} &\ll \frac{K^3Q^3}{N^\frac{3}{2}}\sup_{\|\alpha\|_2=1} \sum_{q} \sum_{q' \neq q} \frac{1}{qq'} \sum_{h,h'} |\alpha(q,h) \overline{\alpha}(q',h')| \sum_{ \substack{ n \in \mathbb{Z} \\ n \equiv \overline{h'}q-\overline{h}q' \,\text{mod}\, qq'} } B^{-\frac{1}{2}} 
\end{align*}
 We use Arithmetic-Geometric Mean to write $|\alpha(q,h) \alpha(q',h')|  \ll  |\alpha(q,h)|^2+|\alpha(q',h')|^2. $ By symmetry, it suffices to analyse
\begin{align*}
    S_{12} & \ll \frac{K^\frac{5}{2}Q^\frac{1}{2}}{N^\frac{5}{4} T^\frac{1}{4}}\sup_{\|\alpha\|_2=1} \sum_{q,q'}  \sum_{h,h'}  |\alpha(q,h)|^2 \sum_{ \substack{ n \ll \frac{Q\sqrt{TN} \Delta}{K(QH_0)^\frac{3}{2} } \\ n \equiv \overline{h'}q-\overline{h}q' \,\text{mod}\, qq'} } \left(\frac{1}{\sqrt{qh}}-\frac{1}{\sqrt{q'h'}} \right)^{-\frac{1}{2}}.
\end{align*}
Both $qh$ and $ q'h'$ has size $QH_0$ and rationalising the denominator gives \begin{equation*}
    \left(\frac{1}{\sqrt{qh}}-\frac{1}{\sqrt{q'h'}} \right)^{-\frac{1}{2}} \asymp  (QH_0)^{\frac{3}{4}} \Delta^{-\frac{1}{2}}.
\end{equation*}
Recall that $\Delta > 0$, We split the sum into dyadic intervals depending upon where $q'h'-qh$ lies. Thus
\begin{align*}
    S_{12} & \ll  T^\epsilon \frac{K^\frac{5}{2}Q^\frac{1}{2}(QH_0)^{\frac{3}{4}}}{N^\frac{5}{4} T^\frac{1}{4}}
    \sup_{\Delta \ll (QH_0)}
    \sup_{\|\alpha\|_2=1} 
\sum_{ \substack{ n \ll \frac{Q\sqrt{TN} \Delta}{K(QH_0)^\frac{3}{2} }} }
    \sum_{q} \sum_{q'}  \sum_{\substack{ h \\ h \equiv -q'\overline{n} \,\text{mod}\, q}} \sum_{\substack{ h' \\ q'h'-qh \sim \Delta \\ h' \equiv q\overline{n} \,\text{mod}\, q'}}  |\alpha(q,h)|^2  \Delta^{-\frac{1}{2}}.
\end{align*}
The term in the summation is independent of $q'$ and $h'$, so we take $|\alpha(q,h)|^2$ outside, This yields
\begin{align*}
    S_{12} & \ll T^\epsilon \frac{K^\frac{5}{2}Q^\frac{1}{2}(QH_0)^{\frac{3}{4}}}{N^\frac{5}{4} T^\frac{1}{4}}
    \sup_{\Delta \ll QH_0}
    \sup_{\|\alpha\|_2=1} 
\sum_{ \substack{ n \ll \frac{Q\sqrt{TN} \Delta}{K(QH_0)^\frac{3}{2} }} }
    \sum_{q} \sum_{q'}  \sum_{\substack{ h \\ h \equiv -q'\overline{n} \,\text{mod}\, q}}  |\alpha(q,h)|^2 \sum_{\substack{ h' \\ q'h'-qh \sim \Delta \\ h' \equiv q\overline{n} \,\text{mod}\, q'} } \Delta^{-\frac{1}{2}}.
\end{align*}
Since $q'h'-qh \sim \Delta$, $h'$ lies in an interval of length $\frac{\Delta}{q'}$. Thus we get
\begin{align}
    \sum_{\substack{h' \\ h' \equiv q\overline{n} \,\text{mod}\, q' \\ q'h'-qh \sim \Delta}} \Delta^{-\frac{1}{2}} \ll \left(1+\frac{\Delta}{q'}\right) \frac{1}{\sqrt{\Delta}}. 
    \end{align}
Thus we have
\begin{align*}
    S_{12} & \ll T^\epsilon \frac{K^\frac{5}{2}Q^\frac{1}{2}(QH_0)^{\frac{3}{4}}}{N^\frac{5}{4} T^\frac{1}{4}}
    \sup_{\Delta \ll QH_0} \left(1+\frac{\Delta}{q'}\right) \frac{1}{\sqrt{\Delta}}  
\sum_{ \substack{ n \ll \frac{Q\sqrt{TN} \Delta}{K(QH_0)^\frac{3}{2} }} }
   \sup_{\|\alpha\|_2=1} \sum_{q} \sum_{q'}  \sum_{\substack{ h \\ h \equiv -q'\overline{n} \,\text{mod}\, q}}  |\alpha(q,h)|^2.
\end{align*}
Now we focus on the following expression
\begin{align*}
       \sup_{\|\alpha\|_2=1} \sum_{q} \sum_{q'}  \sum_{\substack{ h \\ h \equiv -q'\overline{n} \,\text{mod}\, q}}  |\alpha(q,h)|^2 & =    \sup_{\|\alpha\|_2=1} \sum_{q} \sum_{h}   \sum_{\substack{ q' \\ q' \equiv -hn \,\text{mod}\, q}}  |\alpha(q,h)|^2 \\
       & =    \sup_{\|\alpha\|_2=1} \sum_{q} \sum_{h}   |\alpha(q,h)|^2  \sum_{\substack{ q' \\ q' \equiv -hn \,\text{mod}\, q}} 1 \\
       & =    \sup_{\|\alpha\|_2=1} \sum_{q} \sum_{h}   |\alpha(q,h)|^2 \\
       & =1 .
\end{align*}
Thus we conclude that
\begin{align*}
    S_{12} & \ll T^\epsilon \frac{K^\frac{5}{2}Q^\frac{1}{2}(QH_0)^{\frac{3}{4}}}{N^\frac{5}{4} T^\frac{1}{4}}
    \sup_{\Delta \ll QH_0} \left(1+\frac{\Delta}{q'q'}\right) \frac{1}{\sqrt{\Delta}}  
\sum_{ \substack{ n \ll \frac{Q\sqrt{TN} \Delta}{K(QH_0)^\frac{3}{2} }} } 1 \\
& \ll T^\epsilon \frac{K^\frac{5}{2}Q^\frac{1}{2}(QH_0)^{\frac{3}{4}}}{N^\frac{5}{4} T^\frac{1}{4}}
    \sup_{\Delta \ll QH_0} \left(1+\frac{\Delta}{q'q'}\right) \frac{1}{\sqrt{\Delta}}  
\frac{Q\sqrt{TN} \Delta}{K(QH_0)^\frac{3}{2} } \\
& \ll T^\epsilon \frac{K^\frac{5}{2}Q^\frac{1}{2}(QH_0)^{\frac{3}{4}}}{N^\frac{5}{4} T^\frac{1}{4}}
 \frac{QH_0}{Q^2} \sqrt{QH_0}  
\frac{Q\sqrt{TN} }{K(QH_0)^\frac{3}{2} } \\
& \ll T^\epsilon \frac{K^\frac{5}{2}Q^\frac{1}{2}(QH_0)^{\frac{3}{4}}}{N^\frac{5}{4} T^\frac{1}{4}}
 \frac{1}{Q}   
\frac{\sqrt{TN} }{K } \\
& \ll T^\epsilon \frac{K^\frac{3}{2}(QH_0)^{\frac{3}{4}}}{N^\frac{5}{4} T^\frac{1}{4}}
 \frac{\sqrt{TN} }{\sqrt{Q}} = T^\epsilon \frac{K^\frac{3}{2}TQ}{N^\frac{3}{2}}.
\end{align*}
Thus
\begin{equation}\label{s12bounds}
    S_{12} \ll T^\epsilon \frac{K^\frac{3}{2}TQ}{N^\frac{3}{2}}.
\end{equation}
\section{Conclusion}\label{twelve}
Combining the bounds of $S_1(\Delta=0)$ from $\ref{s1bounds}$ , bounds for $S_{11}$ from equation $\ref{s11bounds}$ and bounds for $S_{12}$ from equation $\ref{s12bounds}$ , The shifted convolution sum is bounded by
\begin{equation*}
  S(H,N) \ll  T^\epsilon \left( \frac{K^3Q}{N^\frac{3}{2}} + \frac{K^\frac{3}{2}TQ}{N^\frac{3}{2}} + T^{-A}  \right)\times \frac{M^2}{T} \times \frac{TN^\frac{3}{2}N}{M^2KTQ}=NT^\epsilon \left(
\frac{T}{M^2}+\frac{\sqrt{T}}{\sqrt{M}}\right)
\end{equation*}
 This proves Proposition $\ref{boundshn}$. 
 Substituting Proposition~\ref{boundshn} into equation ~\eqref{combi}, we obtain
 \begin{align}
S_f(M,T) &\ll MT^\epsilon +\sup_{N \ll T^{1+\epsilon}} \frac{M}{N}\sup_{ H \ll \frac{NT^{\epsilon}}{M} }   NT^\epsilon \left(
\frac{T}{M^2}+\frac{\sqrt{T}}{\sqrt{M}}\right) \\
& \ll T^\epsilon\left(M+\frac{T}{M}+\sqrt{TM}\right). 
 \end{align}

This proves Theorem ~\ref{theorem1}. To deduce the corresponding pointwise bound, we use an analogue of Lemma~3.5 of Aggarwal, Leung and Munshi~\cite{ALM}. The proof is identical, with minor modifications.
\begin{lemma}
Let $f$ be a holomorphic cusp form of fixed weight and level $1$.
For $X/2\le T \le X$, we have
\[
|L(\tfrac12+iT,f)|^2
\ll_f
\log X
\left(
1+
\int_{-\log X}^{\log X}
|L(\tfrac12+i(T+v),f)|^2
e^{-v^2/2}\,dv
\right).
\]
\end{lemma}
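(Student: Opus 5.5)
The plan is to follow the standard argument of Aggarwal, Leung and Munshi: represent $L(\tfrac12+iT,f)$ by a contour integral of the completed $L$-function against a Gaussian kernel, shift the contour to the critical line, and apply Cauchy--Schwarz. Concretely, consider
\[
\mathcal{J}=\frac{1}{2\pi i}\int_{(2)} L\!\left(\tfrac12+iT+w,f\right) e^{w^2}\frac{dw}{w},
\]
and move the line of integration to $\Re(w)=-2$. The only pole crossed is the simple pole at $w=0$, whose residue is $L(\tfrac12+iT,f)$. On the line $\Re(w)=-2$ I apply the functional equation. Via Stirling's formula, the gamma factors give $L(\tfrac12+iT+w,f)=\gamma\cdot L(\tfrac12-iT-w,f)$ with $|\gamma|\asymp (1+|T+\Im w|)^{2\Re(-w)}$. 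This lets me rewrite the shifted integral as the analogous integral at $\Re(w)=2$, up to a factor of size $|T|^{4}$ that must be balanced. The cleaner route, which I will actually take, is the symmetric version: insert the factor $(X^{\,w}+\epsilon\text{-conjugate}\,X^{-w})$, as in the approximate functional equation (Lemma~\ref{afe}), so that both pieces are absolutely convergent Dirichlet-type integrals. Then shift both back to $\Re(w)=0$.

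The outcome should be an identity of the form
\[
L(\tfrac12+iT,f)=\frac{1}{2\pi}\int_{\mathbb R}L(\tfrac12+i(T+v),f)\,k(v)\,dv+\text{(similar term with the conjugate)},
\]
where the kernel $k(v)$ satisfies $k(v)\ll e^{-v^2}/|v|$ away from $0$. Near $v=0$ the principal-value singularity is handled by averaging the two contours at $\Re w=\pm 1/\log X$. This costs exactly a factor $\log X$, since $|w|^{-1}\le \log X$ there, and $X^{\pm w}\ll 1$ on those lines for $T\le X$. I then split the $v$-range at $|v|=\log X$. The tail $|v|>\log X$ is bounded using the convexity bound $L(\tfrac12+it,f)\ll (1+|t|)^{1/2+\epsilon}$ against $e^{-v^2}\le X^{-\log X}$, and contributes $O(1)$; this produces the ``$1+$'' term in the statement. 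On $|v|\le\log X$, Cauchy--Schwarz with weight $e^{-v^2/2}$ gives
\[
|L(\tfrac12+iT,f)|^2\ll (\log X)\Big(\int |k(v)|e^{v^2/2}e^{-v^2}\,dv\Big)\int_{-\log X}^{\log X}|L(\tfrac12+i(T+v),f)|^2e^{-v^2/2}\,dv,
\]
and the first integral is $O(\log X)$, or $O(1)$ after the $\log$ has already been extracted.

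The main obstacle is the bookkeeping near $w=0$ when the contours are moved to $\Re w=\pm\delta$ with $\delta=1/\log X$. There $L(\tfrac12\pm\delta+i(T+v),f)$ must be compared to values on the critical line. I plan to avoid this comparison altogether, as ALM do, by not moving all the way to $\Re w=0$. Instead I will apply Cauchy--Schwarz directly on the lines $\Re w=\pm\delta$. Then, at the cost of an absolute constant, I will bound $|L(\tfrac12\pm\delta+it,f)|$ by the critical-line values via the Phragmén--Lindelöf/mean-value (Gabriel) convexity inequality for $\int|L|^2$ in a strip of width $\delta$. The weight $e^{-v^2/2}$ is exactly what is needed for this inequality to hold with the Gaussian factor preserved.
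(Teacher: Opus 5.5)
The paper gives no argument of its own here; it only refers to Lemma~3.5 of Aggarwal--Leung--Munshi. The architecture you settle on is a sound, standard route: write $L(s_0,f)$, $s_0=\tfrac12+iT$, via the contours $\Re w=\pm\delta$ with $\delta=1/\log X$, apply Cauchy--Schwarz there, then return to the critical line. The gap is in the step you yourself call the main obstacle.

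Gabriel's (Hardy--Ingham--P\'olya) inequality bounds the mean square on an \emph{interior} line by a geometric mean of the mean squares on the two \emph{edges} of the strip. In a strip of width $\delta$ with the critical line as an edge, the line $\Re s=\tfrac12+\delta$ is the other edge, so there is nothing to interpolate. In the symmetric strip $|\Re s-\tfrac12|\le\delta$ the inequality runs the wrong way: it bounds the critical line by the off-line values. So, as stated, this step does not produce the critical-line integral.

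What is needed is interpolation across a wide strip whose far edge is controlled, with exponent of order $\delta$. For instance, apply Gabriel to $F(s)=L(s,f)e^{(s-s_0)^2/4}$ on $\tfrac12\le\Re s\le\tfrac32$:
\[
\int_{\mathbb R}|F(\tfrac12+\delta+it)|^2\,dt\le\Big(\int_{\mathbb R}|F(\tfrac12+it)|^2\,dt\Big)^{1-\delta}\Big(\int_{\mathbb R}|F(\tfrac32+it)|^2\,dt\Big)^{\delta}.
\]
The last factor is $O(1)$ by absolute convergence, and $A^{1-\delta}\le 1+A$. This gives
\[
\int|L(\tfrac12+\delta+i(T+v),f)|^2e^{-v^2/2}\,dv\ll 1+\int|L(\tfrac12+i(T+v),f)|^2e^{-v^2/2}\,dv .
\]
So this step yields $1+A$ rather than ``an absolute constant'' times $A$. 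It is itself a source of the additive $1$ in the statement, not only the tail $|v|>\log X$.

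The line $\Re s=\tfrac12-\delta$ is reduced to $\tfrac12+\delta$ by the functional equation and the reality of $\lambda_f(n)$. This costs a factor $\asymp(1+|T+v|)^{2\delta}$ in $|L|$, which is $O(1)$ in the relevant range since $T\le X$. Alternatively, interpolate against the edge $\Re s=-\tfrac12$: its Gaussian-weighted mean square is $\ll X^4$, and $(X^4)^{\delta}\ll 1$.

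Two smaller points. First, the detour in your first paragraph is unnecessary: the functional equation at $\Re w=-2$, the factors $X^{\pm w}$, and the shift back to $\Re w=0$ with a principal value can all be dropped. Since $L(s,f)$ is entire and $e^{w^2}$ has Gaussian decay, the residue theorem gives directly
\[
L(s_0,f)=\frac{1}{2\pi i}\Big(\int_{(\delta)}-\int_{(-\delta)}\Big)L(s_0+w,f)e^{w^2}\frac{dw}{w}.
\]

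Second, your Cauchy--Schwarz bookkeeping should not pull out $\sup|k|\le\log X$. On these lines $|k(v)|=e^{\delta^2-v^2}(\delta^2+v^2)^{-1/2}$, so
\[
\int|k(v)|^2e^{v^2/2}\,dv\ll\int(\delta^2+v^2)^{-1}e^{-3v^2/2}\,dv\ll\delta^{-1}=\log X ,
\]
which yields exactly one factor $\log X$. Your display, with $\log X$ in front of $\int|k(v)|e^{-v^2/2}\,dv$, gives at best $\log X\cdot\log\log X$, because that integral is $\asymp\log\log X$, not $O(1)$. If you take it to be $O(\log X)$ as you suggest, you get $\log^2 X$.
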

Applying the lemma with $X=T$, and using the fact that $M\gg T^{1/3}\gg\log T$, we obtain
\[
\int_{-\log T}^{\log T}
|L(\tfrac12+i(T+v),f)|^2\,dv
\le
\int_{-M}^{M}
|L(\tfrac12+i(T+v),f)|^2\,dv.
\]
This would give
\begin{align*}
|L\left(\tfrac12+iT,f\right)|^2 &  \ll_f
\log T 
\left(
1+
\int_{-\log T}^{\log T}
|L(\tfrac12+i(T+v),f)|^2
\,dv
\right)
\\ & \ll  \log T \left(1+\int_{-M }^{M} |L(\tfrac12+i(T+v),f)|^2\,dv\right) \\
& \ll T^{2/3+\epsilon}.
\end{align*}
Taking square roots gives
\[
L\!\left(\tfrac12+iT,f\right)
\ll_{f,\epsilon}
T^{1/3+\epsilon},
\]
which completes the proof of Corollary~\ref{theorem2}.
\section*{Acknowledgements}
The author is deeply grateful to his supervisor, Prof. Kummari Mallesham, for his constant encouragement, guidance, and many valuable discussions throughout this work. The author sincerely thanks Prof. Sumit Kumar for suggesting this problem, for his generous technical guidance, and for carefully reading several drafts of the manuscript. This work was supported by the GATE Fellowship.\bibliographystyle{plain}  
\bibliography{bbb}  
\nocite{*}
\end{document}